\newcommand{\ev}{\mathrm{ev}}
\def\twoheaddownarrow{\rlap{$\downarrow$}\raise-.5ex\hbox{$\downarrow$}}
\def\twoheaduparrow{\rlap{$\uparrow$}\raise.5ex\hbox{$\uparrow$}}
\newcommand{\uup}{\twoheaduparrow}
\newcommand{\JCocts}[2]{J\text{-}\mathbf{Cocts}(#1,#2)}
\newcommand{\Ord}{\mathbf{Ord}}
\newcommand{\Met}{\mathbf{Met}}
\newcommand{\UMet}{\mathbf{UMet}}
\newcommand{\colim}{\mathrm{colim}}
\newcommand{\QDOM}{(J,\mathcal{Q})\text{-}\mathbf{Dom}}
\newcommand{\QDOMindex}{(J,\mathcal{Q})\text{-}\mathbf{Dom}}
\newcommand{\Cat}[1]{#1\text{-}\mathbf{Cat}}
\newcommand{\Mod}[1]{#1\text{-}\mathbf{Mod}}
\newcommand{\Cocts}[1]{#1\text{-}\mathbf{Cocts}}
\newcommand{\V}{\mathcal{Q}}
\newcommand{\unit}{\mathbf{1}}
\newcommand{\tensor}{\otimes}
\newcommand{\bv}{\bigvee}
\newcommand{\Sup}{\textsf{S}}
\newcommand{\yoneda}{\textsf{y}}
\newcommand{\two}{\mathbf{2}}
\newcommand{\mmminus}[2]{#2 - #1}
\newcommand{\way}{\mathord{\Downarrow}}
\newcommand{\modto}{{\longrightarrow\hspace*{-2.8ex}{\circ}\hspace*{1.2ex}}}
\newcommand{\mate}[1]{\,^\ulcorner\! #1^\urcorner}
\newcommand{\blackright}{\mbox{ $-\!\mbox{\footnotesize $\bullet$}$ }}
\newcommand{\blackleft}{\mbox{ $\mbox{\footnotesize $\bullet$}\!-$ }}
\newcommand{\Idl}{\mathbf{Idl}}
\newcommand{\FC}{\mathbf{FC}}
\newcommand{\FSW}{\mathbf{FSW}}
\renewcommand{\implies}{\Rightarrow}
\newcommand{\pzb}{\subseteq}
\newcommand{\Nat}{\mathbb{N}}
\newcommand{\wb}{\ll}
\newcommand{\eps}{\varepsilon}
\newcommand{\op}{\mathrm{op}}
\newtheorem{theorem}{Theorem}[section]
\newtheorem{lemma}[theorem]{Lemma}
\theoremstyle{definition}
\newtheorem{definition}[theorem]{Definition}
\newtheorem{example}[theorem]{Example}
\numberwithin{equation}{section}
\begin{document}

\begin{abstract}
We describe a duality for quantale-enriched categories that extends the Lawson
duality for continuous dcpos: for any saturated class $J$ of modules that commute with certain weighted limits, 
and under an appropriate choice  of morphisms, the category of $J$-cocomplete and $J$-continuous quantale-enriched categories is self-dual.
\end{abstract}

\keywords{Lawson duality, continuous domain, way-below, quantale-enriched category, 
flat module, open module, complete distributivity}
\subjclass[2010]{06B35, 06D10, 06F07, 18B35, 18D20, 68Q55}

\title{A duality of quantale-enriched categories}

\author{Dirk Hofmann}
\thanks{The first author acknowledges partial financial assistance by Centro de Investiga\c{c}\~{a}o e Desenvolvimento Matem\'{a}tica e Aplica\c{c}\~{o}es da Universidade de Aveiro/FCT and the project MONDRIAN (under the contract PTDC/EIA-CCO/108302/2008).}
\address{Departamento de Matem\'atica, Universidade de Aveiro, 3810-193 Aveiro, Portugal}
\email{dirk@ua.pt}

\author{Pawe\l\ Waszkiewicz}
\thanks{The second author is supported  by grant number N206 3761 37 funded by Polish Ministry of Science and Higher Education.}
\address{Theoretical Computer Science, Jagiellonian University, ul.Prof.S.\L ojasiewicza 6, 30-348 Krak\'{o}w, Poland}
\email{pqw@tcs.uj.edu.pl}

\maketitle

\section{Introduction}
In \cite{hw10} we observed that the left adjoint to the Yoneda embedding in a
quantale-enriched category
$X$ can be interpreted as a notion of approximation in $X$. Thus in
directed-complete posets, approximation is the way-below 
relation \cite{compendium}.I.1.; in complete lattices the totally-below
relation \cite{ran53}; and in (generalised) metric spaces a distance 
$\way\colon X\times X\to[0,\infty]$ such that every $x\in X$ is a ``metric supremum'' of
$\way(-,x)$ \cite{hw10}.

The purpose of this paper is to develop a duality theory for $\V$-categories
that extends the Lawson duality for continuous  dcpos \cite{law79}. Recall that Lawson's 
theorem states that the category of  continuous dcpos with Scott-open filter reflecting maps is self-dual. We show that
under an appropriate choice of morphisms the category of $J$-cocomplete and
$J$-continuous \mbox{($=$ admitting approximation)} \mbox{$\V$-categories} is
self-dual. Our duality theorem holds for any
saturated class $J$ of modules that preserve certain limits; 
therefore it works uniformly for continuous domains, completely distributive
complete lattices, Yoneda-complete quasi-metric spaces, totally distributive $\V$-categories, and
perhaps many other familiar structures from the borderline of metric and order theory.

Our feet rest on shoulders of many. Hausdorff's point of view that a metric is a~relation valued in
non-negative real numbers, brought to light by \cite{law73}, led to a~development of an unified categorical/algebraic description 
of topology, uniformity, order and metric \cite{CH03, CT03, onesetting}.  The idea of relative cocompleteness was developed 
in \cite{Kel_EnrCat, albert:kelly, kock95, KS_Colim, KL_MonCol, schmitt:flat}. Our primary 
examples of classes of modules have already been studied in \cite{FSW, schmitt:flat, vickers}. We do hope that our 
results will be of interest to those who work with categories where the left adjoint to Yoneda embedding has a~left adjoint;
research in this direction include: \cite{joh82, kos86, FW90, RW91, stu07}.

\section{Preliminaries}

\subsection{Quantales}

A $\V = (Q,\leqslant,\otimes,\unit)$ is a commutative unital quantale (in short: a~quantale) such that the unit element $\unit$ is greatest with respect to the order on $(Q, \leqslant)$. We also assume that $\bot\neq \unit$. Examples of quantales include: the two element lattice $\two = (\{\bot,\unit\},\leqslant, \wedge,\unit)$; the unit interval $[0,1]$ in the natural order, with multiplication as tensor; the extended real half line $[0,\infty]$ in the order opposite to the natural one, with addition as tensor. In general, every Heyting algebra with infimum as tensor is a quantale.

\subsection{$\V$-categories}
We recall that a \emph{$\V$-category} is a set $X$ with a map $X\colon X\times X\to Q$, called {\em the structure of} $X$, with two properties: $\unit\leqslant X(x,x)$ for all $x\in X$ (reflexivity), and $X(x,y)\otimes X(y,z)\leqslant X(x,z)$ for all $x,y,z\in X$ (transitivity). In our paper $\Cat{\V}$ denotes the category of $\V$-categories, where morphisms, called $\V$-functors, are maps $f\colon X\to Y$ such that $X(x,z)\leqslant Y(fx,fz)$ for all $x,z\in X$. For example $\Met := \Cat{[0,\infty]}$ is Lawvere's category of generalised metric spaces \cite{law73}, where reflexivity and transitivity correspond respectively to the assumption of self-distance being zero and to the triangle inequality. As another example we consider $\Cat{\two}$, which is isomorphic to the category of preordered sets and monotone maps, and will henceforth be denoted by $\Ord$.\\

\noindent
A $\V$-category is separated if $X(x,y)=X(y,x)=\unit$ implies $x=y$, for all $x,y\in X$. For example a separated $[0,\infty]$-category is a quasi-metric space, where points can possibly be at infinite distance. Any $\V$-category $X$ is preordered by the relation $x\leqslant_X y$ iff $\unit\leqslant X(x,y)$, which is antisymmetric iff $X$ is separated. Clearly, $\V$-functors are $\leqslant_X$-preserving.\\

\noindent The internal hom of $\Cat{\V}$ is the set $Y^X$ of all $\V$-functors of type $X\to Y$ considered with the structure $Y^X(f,g) := \bigwedge_{x\in X} Y(fx,gx)$. The induced order on $Y^X$ is pointwise. The quantale $\V$ is made into a separated
$\V$-category by its internal hom. The induced order $\leqslant_{\V}$ coincides with the original order on $Q$. By $X^\op$ we mean the $\V$-category dual to $X$. $\widehat{X}$ is defined as $\V^{X^\op}$, that is $\widehat{X}(f,g) = \bigwedge_{x\in X} \V(fx, gx)$. For any $X$, we have the $\V$-functor $\yoneda_X\colon X\to\widehat{X}$, $\yoneda_X x = X(-,x)$, called the {\em Yoneda embedding}. The Yoneda embedding is fully faithful. Furthermore, for all $x\in X$ and $f\in\widehat{X}$, we have $\widehat{X}(\yoneda_X x,f)=fx$, and this equality is the statement of the Yoneda Lemma for $\V$-categories.\\

\noindent Lastly, $\Cat{\V}$ admits a tensor product $X\otimes Y((x,y),(z,w))=X(x,z)\otimes Y(y,w)$. Since tensor is left adjoint to internal hom, every $\V$-functor $g\colon X\otimes Y\to Z$ has its {\em exponential mate} $\mate{g}\colon Y\to Z^X$. It is worth noting that the structure of $X$ is always a $\V$-functor of type $X^\op \otimes X\to Q$, and its exponential mate is the Yoneda embedding $\yoneda_X:X\to\widehat{X}$.

\subsection{$\V$-modules}
A $\V$-functor of type $X^\op \otimes Y\to Q$ is called a {\em $\V$-module} (or plainly: a {\em module}). For example, the structure of any $\V$-category $X$ is a module. Moreover, any two modules $\phi\colon X^\op\otimes Y\to Q$ and $\psi\colon Y^\op \otimes Z\to Q$ can be composed to give a module of type $X^\op \otimes Z\to Q$:
\[(\psi \cdot \phi) (x,z) := \bigvee_{y\in Y} (\phi(x,y)\otimes \psi(y,z)).\]
Therefore we think of $\phi\colon X^\op\otimes Y\to Q$ as an arrow $\phi\colon X\modto Y$, which, by the above, can be composed with
$\psi\colon Y\modto Z$ to give $\psi\cdot \phi\colon X\modto Z$. Note
also that $Y\cdot \phi = \phi = \phi\cdot X$.\\

\noindent Any function $f\colon X\to Y$ gives rise to two
modules, namely $f_*\colon X\modto Y$, $f_*(x,y) = Y(fx,y)$ and $f^*\colon Y\modto X$, $f^*(y,x) = Y(y,fx)$. We further observe that for any element $x\colon 1\to X$ ($1$ is the one-element $\V$-category that should not be confused with the unit of the quantale), the module $x^*\colon X\modto 1$ is in fact the same as the $\V$-functor $\yoneda_X x := X(-,x)\in \widehat{X}$. Dually, the module $x_*\colon 1\modto X$ corresponds to the $\V$-functor $\lambda_X x:=X(x,-)$.\\

\noindent The set of all modules of type $X\modto Y$ becomes a complete lattice via the pointwise order where the supremum $\phi$ of a family $\phi_i\colon X\modto Y$ ($i\in I$) of modules can be calculated as $\phi(x,y)=\bigvee_{i\in I}\phi_i(x,y)$. Furthermore, composition of modules preserves this suprema on both sides, and therefore the maps $-\cdot\phi$ and $\phi\cdot-$ have right adjoints $-\blackleft\phi$ and $\phi\blackright-$ respectively. Explicitly, given $\phi\colon X\modto Y$,
\[
(\psi\blackleft\phi)(y,z)=\bigwedge_{x\in X}\V(\phi(x,y),\psi(x,z)
\]
for any $\psi \colon X\modto Z$, and
\[
(\phi\blackright \psi)(z,x)=\bigwedge_{y\in Y}Q(\phi(x,y),\psi(z,y))
\]
for any $\psi \colon Z\modto Y$. We call $\psi\blackleft\phi$ the \emph{extension} of $\psi$ along $\phi$, and $\phi\blackright\psi$ the \emph{lifting} of $\psi$ along $\phi$. This construction will be used to define the so called \emph{way-below module} in Section~\ref{subsect:J-continuous}.\\

\noindent In $\Ord$, modules of type $X\modto 1$ are precisely
(characteristic maps of) lower sets, and modules of type
$1\modto X$ are upper sets of the poset $X$. Furthermore, the up-set of all upper bounds of $\psi\colon:X\modto 1$ is given by $\phi=(\leqslant\blackleft\psi)$, and $x\in X$ is a smallest upper bound of $\psi$ if and only if $x_*=(\leqslant\blackleft\psi)$. On the other hand, in $\Met$, any Cauchy sequence
$(x_n)_{n\in\omega}$ induces a module $\phi\colon 1\modto X$
via $\phi(x) = \lim_{n\to\infty} X(x_n,x)$, and a module
$\psi\colon X\modto 1$ via $\psi(x) = \lim_{n\to\infty} X(x,x_n)$.
Observe that $\psi\cdot \phi \leqslant 0$ and $\phi\cdot
\psi\geqslant X$ in the pointwise order. Conversely, any pair of
modules that satisfies the above equations comes from some
Cauchy sequence on $X$. More generally, we will say that modules $\phi\colon
Z\modto X$, $\psi \colon X\modto Z$ are adjoint iff $\phi\cdot
\psi\leqslant X$ and $\psi\cdot\phi\geqslant Z$. In this case we say
that $\phi$ is a left adjoint to $\psi$ and $\psi$ is a right
adjoint to $\phi$.

\subsection{$J$-cocomplete $\V$-categories}

We recall here briefly the notions of weighted limit and  weighted colimit, for further details we refer to \cite{Kel_EnrCat,KS_Colim}. For a module $\phi\colon 1\modto I$, a $\phi$-weighted limit of a $\V$-functor $h\colon I\to X$ is an element $x\in X$ with $x^*=\phi\blackright h^*$. Dually, for a module $\psi\colon I\modto 1$, a $\psi$-weighted colimit of a $\V$-functor $h\colon I\to X$ is an element $x\in X$ with $x_*=h_*\blackleft\psi$. A $\V$-category $X$ is called \emph{complete} if $X$ admits all weighted limits, and \emph{cocomplete} if $X$ admits all weighted colimits. For instance, $\V$ is both complete and cocomplete where the limit of $h$ and $\phi$ is given by $\bigwedge_{i\in I}\V(\phi(i),h(i))$ and the colimit of $h$ and $\psi$ by $\bigvee_{i\in I}\psi(i)\otimes h(i)$. This argument extends pointwise to $\widehat{X}$, and we also note that a $\V$-category $X$ is complete if and only if $X$ is cocomplete.\\

\noindent One says that a $\V$-functor $f:X\to Y$ preserves the $\phi$-weighted limit $x$ of $h\colon I\to X$ if $f(x)$ is a $\phi$-weighted limit of $f h:I\to Y$, likewise,  $f:X\to Y$ preserves the $\psi$-weighted colimit $x$ of $h\colon I\to X$ if $f(x)$ is a $\psi$-weighted colimit of $f h:I\to Y$. Then $f:X\to Y$ is called \emph{continuous} if $f$ preserves all existing weighted limits in $X$, and $f$ is called \emph{cocontinuous} if $f$ preserves all existing weighted colimits in $X$.\\

\noindent In the sequel we will be interested in special kinds of colimits, hence we suppose that there is given a collection $J$ of modules of type $X\modto 1$, called thereafter \emph{$J$-ideals}. The set of those modules in $J$ with domain $X$ we denote as $JX$. Then we define $X$ to be \emph{$J$-cocomplete} if $X$ admits all $\psi$-weighted colimits with $\psi$ in $J$, and a $\V$-functor $f:X\to Y$ is called \emph{$J$-cocontinuous} if $f$ preserves all existing $J$-weighted colimits in $X$. We will also assume that our class $J$ of modules is \emph{saturated}, which amounts to saying that $JX$ contains all modules $x^*:X\modto 1$ and is closed in $\widehat{X}$ under $J$-weighted colimits. In this case, $X$ is $J$-cocomplete if and only if $X$ admits all $\psi$-weighted colimits with $\psi:X\modto 1$ in $\Mod{J}$, which in turn is equivalent to $\yoneda_X\colon X\to JX$ having a left adjoint in $\Cat{\V}$. That is, there must exist a $\V$-functor $\Sup_X\colon JX\to X$ such that for all $\phi\in JX$ and all $x\in X$:
\begin{equation}
\label{eq:ideal}
X(\Sup_X\phi, x) = \widehat{X}(\phi,\yoneda_X x).
\end{equation}
The element $\Sup_X\phi\in X$ is called the \emph{supremum} of $\phi$. If $JX=\widehat{X}$ and $\Psi:\widehat{X}\modto 1$, then $\Sup_X(\Psi)(x)=\bigvee_{\psi\in\widehat{X}}\Psi(\psi)\otimes\psi(x)=\bigvee_{\psi\in\widehat{X}}\Psi(\psi)\otimes[\yoneda(x),\psi]$, hence $\Sup_X(\Psi)=\Psi\cdot\yoneda_*(x)$. Since $JX$ is closed in $\widehat{X}$ under $J$-colimits, the same formula describes $J$-suprema in $JX$. For example, if $\V=\two$, then $\widehat{X}$ is a poset of lower subsets of the poset $X$ ordered by inclusion, $\psi$ is a lower set of lower sets of $X$, and the supremum of $\psi$ is nothing else but
$\bigcup\psi$.\\

\noindent A $\V$-functor $f\colon X\to Y$ between $J$-cocomplete $\V$-categories is $J$-cocontinuous if and only if $f(\Sup\phi) = \Sup(
Jf(\phi))$, for all $\phi\in JX$. Here we make use of the fact that $J$ defines a functor $J:\Cat{\V}\to\Cocts{J}$ which sends a $\V$-category $X$ to $JX$, and a $\V$-functor $f:X\to Y$ to $Jf:JX\to JY,\,\psi\mapsto\psi\cdot f^*$. We use the occasion to remark that $J:\Cat{\V}\to\Cocts{J}$ is left adjoint to the inclusion functor $\Cocts{J}\to\Cat{\V}$. Even better, $\Cocts{J}\to\Cat{\V}$ is monadic which we need here only to conclude that $\Cocts{J}$ is complete and limits in $\Cocts{J}$ are calculated as in $\Cat{\V}$. For details we refer to \cite{KL_MonCol}.\\

\noindent There is a well-known general procedure to specify a saturated class $J$ of modules which we describe now. 
\begin{example}\label{ex:JdefinedByPhi}
Fix a collection $\Phi$ of modules $\phi:1\modto I$, and define $J$ as the class of all those modules $\psi:X\modto 1$ where the $\V$-functors
\[
\psi\cdot-:\V^X\to\V,\,\alpha\mapsto\psi\cdot\alpha=\bigvee_{x\in X}\alpha(x)\otimes\psi(x).
\]
preserve $\Phi$-weighted limits. Here we identify a $\V$-functor $\alpha:X\to\V$ with a module $\alpha:1\modto X$. Explicitly, we require that, for any $\phi:1\modto I$ in $\Phi$ and any $\V$-functor $\alpha_{-}:I\to\V^X$,
\[
\bigwedge_{i\in I}\V(\phi(i),\bigvee_{x\in X}\alpha_i(x)\otimes\psi(x))
=
\bigvee_{x\in X}\left(\bigwedge_{i\in I}\V(\phi(i),\alpha_i(x))\right) \otimes\psi(x).
\]
Note that $\V$-functoriality of $\psi\cdot-$ implies already that the left hand side is larger or equal to the right hand side.
\end{example}

\noindent Cocompleteness relative to $J$ allows for a unified presentation of seemingly unrelated notions of order- and metric completeness:

\begin{example}
\label{extremecocompl}
For any $\V$, there is a largest and a smallest choice of $J$: let either $J$ consist of all modules of type $X\modto 1$, or only of representable modules $x^*:X\modto 1$ where $x\in X$. In the first case a $\V$-category $X$ is $J$-cocomplete if and only if it is cocomplete, and in the second case every $\V$-category is $J$-cocomplete.
\end{example}

\begin{example}
\label{ex:ord} For $\V=\two$, we consider all modules of
type $X\modto 1$ corresponding to order-ideals in $X$ (i.e. directed
and lower subsets of $X$), and write $J= \Idl$. Then $X$ is
$\Idl$-cocomplete iff $X$ is a directed-complete.
\end{example}
\begin{example}
\label{ex:met} For $\V=[0,\infty]$ we consider all modules
of type $X\modto 1$ corresponding to ideals in $X$ in the sense of
\cite{rutten:yoneda}, and write $J=\FC$. These ideals in turn
correspond to equivalence classes of forward Cauchy sequences on
$X$. Hence, $X$ is $\FC$-cocomplete if and only if each forward
Cauchy sequence on $X$ converges if and only if $X$ is sequentially Yoneda
complete.
\end{example}

\begin{example}
For any $\V$ we can choose $J$ to consist of all right adjoint modules (i.e. modules that have left adjoints).
Recall from \cite{law73} that, for $\V = [0,\infty]$, a right adjoint module $X\modto 1$ corresponds to an equivalence class of Cauchy sequences on $X$. A generalised metric space $X$ is $J$-cocomplete if and only if each Cauchy sequence on $X$ converges.
\end{example}

\begin{example}
\label{ex:FSWideals} For a completely distributive quantale $\V$ with totally below relation $\prec$ and any $\V$-category $X$, a module $\psi\colon X\modto 1$ is a $\FSW$-ideal if: (a) $\bigvee_{z\in X} \psi z = \unit$, and (b)
for all $e_1,e_2,d\prec \unit$, for all $x_1,x_2\in X$, whenever
$e_1\prec \psi x_1$ and $e_2\prec \psi x_2$, then there exists $z\in
X$ such that $d\prec \psi z$, $e_1\prec X(x_1,z)$ and $e_2\prec
X(x_2,z)$. Now for $\V = [0,\infty]$ $\FSW$-ideals on $X$ are in a
bijective correspondence with equivalence classes of forward Cauchy
nets on $X$ \cite{FSW}; for $\V=\two$, $\FSW$-ideals are
characteristic maps of order-ideals on $X$. Therefore this example
unifies Examples \ref{ex:ord}, \ref{ex:met}.
\end{example}

\begin{example}\label{example:vickers}
For any quantale $\V$, a module $\psi\colon X\modto 1$ is called \emph{flat} if the map $(\psi\cdot -)$ taking modules of type $1\modto X$ to $\V$
preserves finite meets. For $\V=\two$, one verifies that $\psi:X\modto 1$ is flat if and only if $\psi:X^\op\to\two$ is the characteristic map of a directed down-set. For $\V=[0,\infty]$ with $\otimes=+$, Theorem 7.15 of \cite{vickers} states that flat modules are the same as $\FSW$-ideals, therefore this example unifies Examples \ref{ex:ord}, \ref{ex:met} as well. However, as we will show in Subsection \ref{subsect:ultramet}, flat modules and $\FSW$-ideals are in general different.
%This, in particular, justifies that in the metric case the class of $\FSW$-ideals can be used in the duality theorem (Thm.~\ref{thm:duality}). We take advantage of this observation (and further explore it) in Section \ref{subsect:4.2}. 
\end{example}

\begin{example}
\label{ex:formal balls}
For any $\V$, put $JX$ to be the set of all modules $\psi:X\modto 1$ of the form $\psi=u\cdot x^*$ where $x\in X$ and $u\in Q$. 
Here we think of $u\in Q$ as a module $1\modto 1$. Spelled out, for $y\in X$ one has $\psi(y)=X(y,x)\otimes u$. 
Note that $\psi(y)=\bot$ whenever $u=\bot$, independently of $x\in X$. A $\V$-category $X$ is $J$-cocomplete if it admits ``tensoring'' 
with elements of $\V$ in the following sense: for any $x\in X$ and $u\in Q$, there exists a (necessarily unique up to equivalence) element $z\in X$ with
\[
 X(z,y)=\V(u,X(x,y))
\]
for all $y\in X$, and one denotes $z$ as $u\otimes x$. 
\end{example}

\subsection{$J$-continuous $J$-cocomplete $\V$-categories}
\label{subsect:J-continuous}

$J$-continuity for $\V$-categories, introduced in \cite{hw10}, allows for a unified treatment of many structures that play a major role in theoretical computer science, e.g. continuous domains, complete metric spaces, or completely distributive complete lattices.

\begin{definition}
A $J$-cocomplete $\V$-category $X$ is \emph{$J$-continuous} if the supremum $\Sup_X\colon JX\to X$ has a left adjoint.
\end{definition}

Note that any $\V$-functor of type $X\to JX$ corresponds to a certain module $X\modto X$ belonging to $J$. Hence, $X$ is
$J$-continuous if and only if there exists a module $\way_X\colon X\modto X$ in $J$ with $\mate{\way_X}\dashv \Sup_X$. It is not difficult to see that $\Sup_X^*\cdot\way_X\leqslant\yoneda_{X*}$, and $\way_X$ is the largest module that satisfies this inequality; hence we have identified $\way_X\colon X\modto X$ as the lifting $\way_X = \Sup_X^* \blackright{\yoneda_{X}}_*$. In fact, module $\way_X:= \Sup_X^* \blackright{\yoneda_{X}}_*$ exists for any $J$-cocomplete $\V$-category, and we refer to it as the \emph{way-below} module. It is worth noting that $JX$ is $J$-continuous for every $\V$-category $X$. In this case, the way-below module is given by
\begin{equation}\label{way_for_JX}
\way(\psi,\psi')=\bigvee_{x\in X}\psi'(x)\otimes[\psi,x^*].
\end{equation}

In the simplest case, $\V=\two$ and $J=\Idl$, the module $\way_X$ is indeed the (characteristic map of the) way-below relation on $X$. In the case of metric spaces, as a~consequence of symmetry, $\way_X\colon X\modto X$ is the same as the structure $X\colon X\modto X$.

We call a module $v\colon X\modto X$ {\em auxiliary}, if $v\leqslant X$; {\em interpolative}, if $v\leqslant v\cdot v$; {\em approximating}, if $v\in J$ and $X\blackleft v = X$; {\em $J$-cocontinuous}, if $\Sup_X^*\cdot v = \yoneda_{X*}\cdot v$. In a $J$-continuous $J$-cocomplete $\V$-category, the way-below module is auxiliary, interpolative, approximating and $J$-cocontinuous. In fact, we show \cite{hw10} that a $J$-cocomplete $\V$-category is $J$-continuous iff the way-below module is approximating.

Consider some examples: $\FSW$-continuous $\FSW$-cocomplete $\two$-categories are precisely continuous domains; cocontinuous cocomplete $\two$-categories are completely distributive complete lattices (there the way-below module becomes the `totally-below' relation associated with complete distributivity of the underlying lattice); $[0,\infty]$ considered with the generalised metric structure $[0,\infty](x,y) = \max\{y-x,0\}$ is an $\FSW$-continuous $\FSW$-complete $[0,\infty]$-category; complete metric spaces are $\FSW$-continuous $\FSW$-cocomplete $[0,\infty]$-categories.

%Again, some examples: $\Idl$-cocontinuous (as well as $\FSW$-cocontinuous) maps in $\Ord$ are precisely the Scott-cocontinuous functions; $\FC$-cocontinuous maps in $\Met$ are precisely the maps that preserve limits of forward Cauchy sequences; $\FSW$-cocontinuous $[0,\infty]$-functors are precisely the maps that preserve limits of forward Cauchy nets.

\subsection{Open modules}
\label{subsect:open modules}
Let $\JCocts{X}{Y}$ denote the set of all $J$-cocontinuous $\V$-functors from $X$ to $Y$, and we view $\JCocts{X}{\V}$ as a sub-$\V$-category of $\V^X$.

\begin{lemma}
$\JCocts{X}{\V}$ is closed under arbitrary suprema in $\V^X$. Hence, $\JCocts{X}{\V}$ is cocomplete.
\end{lemma}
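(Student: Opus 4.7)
My plan is to prove the stronger statement that $\JCocts{X}{\V}$ is closed in $\V^X$ under the pointwise colimit formula. Since $\V^X$ is cocomplete, with colimits computed pointwise from those of $\V$, this closure property will at once imply cocompleteness of $\JCocts{X}{\V}$, colimits being computed exactly as in $\V^X$.

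First I would record a convenient reformulation of $J$-cocontinuity. Identifying a $\V$-functor $f:X\to\V$ with the corresponding module $f:1\modto X$, the supremum formula in $\V$ yields
\[
\Sup_\V(Jf(\psi))=\Sup_\V(\psi\cdot f^*)=\bigvee_{x\in X}\psi(x)\otimes f(x)=\psi\cdot f,
\]
so $f$ is $J$-cocontinuous if and only if $f(\Sup_X\psi)=\psi\cdot f$ for every $\psi\in JX$.

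Next, taking a weight $\Phi:I\modto 1$ and a diagram $h:I\to\JCocts{X}{\V}$ with $h_i:=h(i)$, I would verify that the colimit in $\V^X$, given pointwise as the module $f=\bigvee_i\Phi(i)\otimes h_i$, remains $J$-cocontinuous. For any $\psi\in JX$,
\[
f(\Sup_X\psi)=\bigvee_i\Phi(i)\otimes h_i(\Sup_X\psi)=\bigvee_i\Phi(i)\otimes(\psi\cdot h_i)=\psi\cdot\Bigl(\bigvee_i\Phi(i)\otimes h_i\Bigr)=\psi\cdot f,
\]
using $J$-cocontinuity of each $h_i$ together with the fact, recorded in the preliminaries, that module composition distributes over arbitrary suprema on both sides. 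By the reformulation, $f$ lies in $\JCocts{X}{\V}$.

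I expect no substantial obstacle: the entire argument rests on the quantale axiom that tensor, equivalently module composition, preserves arbitrary joins. The only small point requiring a moment of thought is that the pointwise colimit formula in $\V^X$ can be rewritten as the module $\bigvee_i\Phi(i)\otimes h_i$ under the identification of $\V$-functors $X\to\V$ with modules $1\modto X$; but this is immediate from the definition of the $\V$-category structure on $\V^X$.
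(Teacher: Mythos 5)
Your proof is correct and is essentially the paper's argument in unfolded form: the paper simply observes that $\bigvee\colon\V^I\to\V$ and $u\otimes-\colon\V\to\V$ are left adjoints and hence preserve all ($J$-)weighted colimits, which is precisely the distributivity of $\otimes$ over $\bigvee$ that your explicit computation with $f=\bigvee_i\Phi(i)\otimes h_i$ relies on. Your version has the minor merit of handling a general weight in one stroke rather than splitting it into conical suprema and tensors.
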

\begin{proof}
Just observe that $\bigvee\colon\V^I\to\V$ is a $\V$-functor left adjoint to the diagonal $\Delta\colon\V\to\V^I$, for any set $I$; and $u\otimes-\colon\V\to\V$ is a $\V$-functor left adjoint to $\V(u,-)\colon\V\to\V$.
\end{proof}

From the lemma above we deduce that the inclusion functor $\JCocts{X}{\V}\hookrightarrow \V^X$ has a right adjoint $v\colon \V^X\to \JCocts{X}{\V}$.

If $X$ is $J$-cocomplete and $J$-continuous, this right adjoint has a simple description. In fact, since $\way_X\dashv \Sup_X$ and $\Sup_X\dashv \yoneda_X$, the map $\V^X\to \JCocts{X}{\V}$, $f\mapsto f_L\cdot \way_X$ (where $f_L$ is left Kan extension of $f$) is right adjoint to $\JCocts{X}{\V}\hookrightarrow \V^X$ in $\Ord$, hence it underlies $v$. Hence in this case we can write $v$ as the corestriction of the composite of left adjoints
\[\V^X \longrightarrow \JCocts{JX}{\V}\hookrightarrow \V^{JX}\stackrel{-\cdot \way_X}{-\!\!\!-\!\!\!-\!\!\!\longrightarrow} \V^X\]
to $\JCocts{X}{\V}$, hence $v$ is itself left-adjoint.

\begin{lemma}
If $X$ is $J$-cocomplete and $J$-continuous, then $\JCocts{X}{\V}$ is totally continuous.
\end{lemma}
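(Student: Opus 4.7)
The plan is to show that $Y:=\JCocts{X}{\V}$ is cocomplete and that $\Sup_Y\colon\widehat{Y}\to Y$ admits a left adjoint in $\Cat{\V}$; this is precisely ``totally continuous'' in the sense of the paper, i.e.\ $J$-continuity for $J$ the class of all modules. Cocompleteness is immediate from the preceding lemma: $Y$ is closed under arbitrary suprema in $\V^X$, so the inclusion $i\colon Y\hookrightarrow\V^X$ preserves all colimits and $\Sup_Y$ is computed as in $\V^X$.

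The engine of the argument is that the presheaf $\V$-category $\V^X=\widehat{X^\op}$ is itself totally continuous — this is the earlier assertion ``$JA$ is $J$-continuous for every $\V$-category $A$'' applied to the maximal class. Let $t\colon\V^X\to\widehat{\V^X}$ denote a chosen left adjoint of $\Sup_{\V^X}$. I would then transport this left adjoint to $Y$ along the retraction set up just above the statement: $i\dashv v$ with $i$ fully faithful (so $v\cdot i\cong 1_Y$) and, crucially, $v$ itself left adjoint. Consequently both $i$ and $v$ are cocontinuous, so that
\[
 i\cdot\Sup_Y=\Sup_{\V^X}\cdot\widehat{i}\quad\text{and}\quad v\cdot\Sup_{\V^X}=\Sup_Y\cdot\widehat{v},
\]
where $\widehat{(-)}$ denotes the covariant presheaf functor $\widehat{f}(\psi)=\psi\cdot f^*$.

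I would then define $t_Y:=\widehat{v}\circ t\circ i\colon Y\to\widehat{Y}$ and verify $t_Y\dashv\Sup_Y$ by a short diagram chase. For the unit $1_Y\pod\Sup_Y\cdot t_Y$: rewrite $\Sup_Y\cdot\widehat{v}=v\cdot\Sup_{\V^X}$ using cocontinuity of $v$, insert the unit $1_{\V^X}\pod\Sup_{\V^X}\cdot t$ of the adjunction on $\V^X$, and collapse using $v\cdot i=1_Y$. For the counit $t_Y\cdot\Sup_Y\pod 1_{\widehat Y}$: rewrite $i\cdot\Sup_Y=\Sup_{\V^X}\cdot\widehat{i}$ using cocontinuity of $i$, insert the counit $t\cdot\Sup_{\V^X}\pod 1_{\widehat{\V^X}}$, and collapse $\widehat{v}\cdot\widehat{i}=\widehat{v\cdot i}=1_{\widehat Y}$ by functoriality of $\widehat{(-)}$.

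In substance this is a ``totally continuous $\V$-categories are closed under cocontinuous split retracts'' lemma applied to $i,v$. The only potential pitfall is reconciling conventions — in particular, confirming that ``$v$ is a left adjoint'' from the preceding paragraph yields the cocontinuity identity above, and that $\widehat{(-)}$ composes covariantly in the direction used. No further structure of $X$ is invoked beyond what has already been packaged into $v$'s being a left adjoint, which was where $J$-continuity of $X$ entered.
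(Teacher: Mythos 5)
Your proposal is correct and is essentially the paper's own argument in expanded form: the paper's proof is the one-line observation that $\V^X$ is totally continuous and that $\JCocts{X}{\V}$ inherits this because $v$ is both a left and a right adjoint, which is exactly the cocontinuous-split-retract transport ($t_Y=\widehat{v}\cdot t\cdot i$) you carry out explicitly. Your unit/counit verification just spells out what the paper leaves implicit.
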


\begin{proof}
$\V^X$ is totally continuous, and $\JCocts{X}{\V}$ inherits this property since $v\colon \V^X\to \JCocts{X}{\V}$ is a left and a right adjoint.
\end{proof}

We put now $FX := \JCocts{X}{\V}\cap J(X^\op)$ and call $\alpha\in FX$ an {\em open module}. More precisely, $FX$ is defined via the pullback in $\Cocts{J}$  of two inclusions: $\JCocts{X}{\V}\hookrightarrow \V^X$, $J(X^\op)\hookrightarrow \V^X$, which tells us that:
\begin{itemize}
\item $FX$ is $J$-cocomplete,
\item both inclusion maps $FX\hookrightarrow J(X^\op)$ and $FX\hookrightarrow \JCocts{X}{\V}$ preserve $J$-suprema.
\end{itemize}

\begin{definition}
We say that a $J$-continuous $\V$-category $X$ is \emph{open module determined} if for all $x,y\in X$:
\begin{equation}
\label{eq:technical lemma}
\way_X(x,y)= \bigvee_{\alpha\in FX}(\alpha(y)\tensor [\alpha, \lambda_X(x)]).
\end{equation}
\end{definition}
\noindent Note that, for all $\alpha\in FX$ and $x,y\in X$,
\[
\alpha(y)\tensor [\alpha, \lambda_X(x)]
=\bigvee_{z\in X}(\alpha(z)\otimes\way_X(z,y)\otimes[\alpha,X(x,-)])
\leqslant\bigvee_{z\in X}X(x,z)\otimes\way_X(z,y)=\way_X(x,y),
\]
hence \eqref{eq:technical lemma} is equivalent to
\begin{equation*}
\way_X(x,y)\leqslant \bigvee_{\alpha\in FX}(\alpha(y)\tensor [\alpha, \lambda_X(x)]).
\end{equation*}
Furthermore, \eqref{eq:technical lemma} is equivalent to
\[
\way_X(x,y)= \bigvee_{\alpha\in FX}(\alpha(y)\tensor [\alpha, \way_X(x,-)])
\]
since $\way_X(x,-)\leqslant\lambda_X(x)$ and
\begin{align*}
\way_X(x,y)
&=\bigvee_{z\in X}\way_X(x,z)\otimes\way_X(z,y)\\
&=\bigvee_{z\in X}\way_X(x,z)\otimes\bigvee_{\alpha\in FX}(\alpha(y)\tensor [\alpha, \lambda_X(z)])\\
&=\bigvee_{\alpha\in FX}\alpha(y)\tensor\bigvee_{z\in X}(\way_X(x,z)\otimes[\alpha, \lambda_X(z)])\\
&\leqslant\bigvee_{\alpha\in FX}\alpha(y)\tensor[\alpha, \bigvee_{z\in X}\way_X(x,z)\otimes X(z,-)]\\
&=\bigvee_{\alpha\in FX}(\alpha(y)\tensor [\alpha, \way_X(x,-)]).
\end{align*}

% \begin{example}
% For any choice of $J$ and any $\V$-category $X$, $F(JX)\cong J(X^\op)$ and $JX$ is open module determined. 
% \end{example}

\section{The duality}

In this section we assume that a class $\Phi$ of limit weights $\phi:1\modto I$ is given, and we consider the corresponding class $J$ of modules as described in Example \ref{ex:JdefinedByPhi}. Furthermore, let $X$ be a $J$-cocomplete, $J$-continuous and open module determined $\V$-category.\\

\noindent Each $x\in X$ defines:
\begin{align*}
\ev_x\colon FX &\to \V\\
\alpha &\mapsto \alpha(x).
\end{align*}

\begin{lemma}
\label{lemma:ev}
For any $x\in X$, the map $\ev_x$ is an open module on $FX$.
\end{lemma}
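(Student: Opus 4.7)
The plan is to verify the two defining conditions of $F(FX) = \JCocts{FX}{\V}\cap J((FX)^{\op})$ separately. $\V$-functoriality of $\ev_x$ is immediate from $FX(\alpha,\beta)=\bigwedge_y \V(\alpha(y),\beta(y))\leq \V(\alpha(x),\beta(x))$. For $J$-cocontinuity, I would argue that $J$-suprema in $FX$ are computed pointwise in $\V^X$: by the pullback description of $FX$ in $\Cocts{J}$, the inclusion $FX\hookrightarrow\JCocts{X}{\V}$ preserves $J$-suprema, and the closure lemma of the preceding subsection tells us that $\JCocts{X}{\V}\hookrightarrow\V^X$ is closed under arbitrary suprema. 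Hence $\Sup_{FX}(\Psi)(x)=\bigvee_\alpha \Psi(\alpha)\otimes\alpha(x)=\Sup_\V(J\ev_x(\Psi))$, giving $J$-cocontinuity.

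The main work is to show $\ev_x\in J((FX)^{\op})$, i.e.\ that the map $\ev_x\cdot-\colon\widehat{FX}\to\V$, $\delta\mapsto\bigvee_\alpha \delta(\alpha)\otimes\alpha(x)$, preserves $\Phi$-weighted limits; only the nontrivial inequality $\bigwedge_i\V(\phi(i),\bigvee_\alpha \delta_i(\alpha)\otimes\alpha(x))\leq\bigvee_\alpha(\bigwedge_i\V(\phi(i),\delta_i(\alpha)))\otimes\alpha(x)$ needs proof. The first move is to use $J$-cocontinuity of each $\alpha\in FX$ together with $x=\Sup_X\way_X(-,x)$ (from $J$-continuity of $X$) to rewrite $\alpha(x)=\bigvee_y\way_X(y,x)\otimes\alpha(y)$, producing the factorisation $\ev_x\cdot\delta_i = \way_X(-,x)\cdot F(\delta_i)$ where $F(\delta_i):=\bigvee_\alpha \delta_i(\alpha)\otimes\alpha\in\V^X$. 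Because $\way_X$ is approximating, $\way_X(-,x)\in JX$, and by the very definition of $J$ from $\Phi$ in Example~\ref{ex:JdefinedByPhi} the composition $\way_X(-,x)\cdot-\colon\V^X\to\V$ preserves $\Phi$-limits. Commuting the $\phi$-limit past it reduces the left-hand side to $\bigvee_y L_y\otimes\way_X(y,x)$, where $L_y:=\bigwedge_i\V(\phi(i),F(\delta_i)(y))$.

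Next, I would apply the open-module-determined hypothesis $\way_X(y,x)=\bigvee_\beta \beta(x)\otimes[\beta,\lambda_X(y)]$, obtaining $\bigvee_\beta \beta(x)\otimes K(\beta)$ with $K(\beta):=\bigvee_y L_y\otimes[\beta,\lambda_X(y)]$. The technical heart is the pointwise estimate
\[
\alpha(y)\otimes[\beta,\lambda_X(y)]\leq FX(\beta,\alpha),
\]
which I would prove by noting $[\beta,\lambda_X(y)]\otimes\beta(z)\leq X(y,z)$ (from the definition of the lifting and $\V(u,v)\otimes u\leq v$) followed by $X(y,z)\otimes\alpha(y)\leq\alpha(z)$ ($\V$-functoriality of $\alpha$), then intersecting over $z$. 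Combined with contravariant functoriality of $\delta_i$ on $FX$, this yields $F(\delta_i)(y)\otimes[\beta,\lambda_X(y)]\leq\delta_i(\beta)$, and hence $L_y\otimes[\beta,\lambda_X(y)]\otimes\phi(i)\leq\delta_i(\beta)$ for every $i$, so that $K(\beta)\leq\bigwedge_i\V(\phi(i),\delta_i(\beta))$. Multiplying by $\beta(x)$ and taking $\bigvee_\beta$ produces exactly the desired right-hand side.

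The main obstacle is orchestrating the hypotheses in the correct order: $J$-continuity of $X$ is what introduces $\way_X$ and lets the $\phi$-limit cross over via $\way_X(-,x)\in JX$, while open-module-determinedness is what subsequently allows me to re-extract elements of $FX$ from the $\way_X$-expression. The bridge linking them is the estimate $\alpha(y)\otimes[\beta,\lambda_X(y)]\leq FX(\beta,\alpha)$, which collapses the internal colimit over $\alpha$ inside $F(\delta_i)(y)$ against the lifting $[\beta,\lambda_X(y)]$ into a hom value on which $\delta_i$ acts by plain functoriality.
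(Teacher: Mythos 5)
Your argument is correct and is essentially the paper's own proof: both establish $J$-cocontinuity by computing $J$-suprema in $FX$ pointwise, and both prove $\ev_x\in J((FX)^\op)$ by rewriting $\alpha(x)=\bigvee_y\way_X(y,x)\otimes\alpha(y)$, commuting the $\Phi$-weighted limit past $\way_X(-,x)\cdot-$ (using $\way_X(-,x)\in JX$), invoking open-module-determinedness, and finishing with the Yoneda-type estimate $\alpha(y)\otimes[\beta,\lambda_X(y)]\leqslant[\beta,\alpha]$. The only differences are cosmetic (order of bookkeeping and an explicit verification of the key pointwise estimate that the paper derives via $\beta(y)=[\lambda_X(y),\beta]$).
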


\begin{proof}
Certainly, $\ev_x$ is $J$-continuous, since it is the restriction of
\begin{align*}
-\cdot x_*\colon J(X^\op)\to\V &&
\text{(here $x\in X^\op$ and therefore $x_*\colon 1\modto X^\op$)}
\end{align*}
to $FX$. We show now that $\ev_x\in J(FX^\op)$, that is,
\[
 C_x:=\ev_x\cdot-\colon\Mod{\V}(FX,1)\to\V,\,\Psi\mapsto\bigvee_{\alpha\in FX}\Psi(\alpha)\otimes\alpha(x)
\]
preserves $\Phi$-weighted limits. Note that $\Mod{\V}(FX,1)\cong\Mod{\V}(1,FX^\op)$. Furthermore, since $\alpha\in FX$ is $J$-cocontinuous, $C_x=\bigvee_{y\in X}C_y\otimes\way_X(y,x)$. Let $\phi:1\modto I$ be in $\Phi$ and $\Psi_{-}\colon I\to \Mod{\V}(FX,1),\,i\mapsto\Psi_i$ be a $\V$-functor. Then
\begin{align*}
\bigwedge_{i\in I}\V(\phi(i),C_x(\Psi_i))
&=\bigwedge_{i\in I}\V(\phi(i),\bigvee_{y\in X}C_y(\Psi_i)\otimes\way_X(y,x))\\
&= \bigvee_{y\in X}\left(\bigwedge_{i\in I}\V(\phi(i),C_y(\Psi_i))\right)\otimes\way_X(y,x) &\text{($\way(-,x)$ is in $J$)}\\
&\leqslant \bigvee_{\alpha\in FX}\alpha(x)\otimes\bigvee_{y\in X}\bigwedge_{i\in I}\V(\phi(i),C_y(\Psi_i)\otimes[\alpha,\lambda_X y])\\
&\leqslant \bigvee_{\alpha\in FX}\alpha(x)\otimes\bigwedge_{i\in I}\V(\phi(i),\Psi_i(\alpha))
\end{align*}
since
\[C_y(\Psi_i)\otimes[\alpha,\lambda_X y]=\bigvee_{\beta\in FX}\Psi_i(\beta)\otimes[\alpha,\lambda_X y]\otimes[\lambda_X y,\beta]\leqslant\bigvee_{\beta\in FX}\Psi_i(\beta)\otimes[\alpha,\beta]=\Psi_i(\alpha).\qedhere\]
\end{proof}

\noindent We further obtain a map $\eta_X\colon X \to FFX$ given by:
\begin{align}
\label{eq:eta}
x &\mapsto \ev_x.
\end{align}
This is indeed a $\V$-functor, since for any $y,z\in X$ we have:
\[[\eta_X(y),\eta_X(z)] = \bigwedge_{\alpha\in FX}\V(\alpha(y),\alpha(z))\geqslant X(y,z).\]
% One also verifies that $\eta_X$ preserves $J$-suprema. In fact, for any $\psi\in JX$ and $\alpha\in FX$:
% \[\eta_X(\Sup_X\psi)(\alpha) = \alpha(\Sup_X\psi) = \colim(\alpha,\psi) = \bigvee_{x\in X} (\alpha(x)\tensor \psi(x))\]
% and
% \begin{align*}
% \Sup_{FFX}\cdot J\eta_X (\psi)(\alpha) &= \psi\cdot \eta_X^*\cdot {\yoneda_{F(X^\op)}}_* (\alpha)\\
%                                          &= \bigvee_{x\in X}(\psi(x)\tensor {\yoneda_{F(X^\op)}}_*(\alpha, \eta_X(x)))\\
%                                          &= \bigvee_{x\in X} (\alpha(x)\tensor \psi(x)).
% \end{align*}
% We have proved
% \begin{lemma}
% \label{lemma:eta}
% The map $\eta_X$ is an open module on $X$.
% \end{lemma}

\begin{lemma}
\label{lemma:FX continuous}
$FX$ is $J$-continuous with the way-below module $\way_{FX}\colon FX\modto FX$ given by:
\begin{equation}
\label{eq:way on FX}
\way_{FX}(\beta,\alpha) = \bigvee_{x\in X} (\alpha(x)\tensor [\beta,\lambda_X(x)]).
\end{equation}
\end{lemma}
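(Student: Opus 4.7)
The plan is to establish the formula $\way_{FX}(\beta,\alpha)=w(\beta,\alpha):=\bigvee_x\alpha(x)\otimes[\beta,\lambda_X x]$ for the way-below module and concurrently that $FX$ is $J$-continuous. The strategy is to verify both that $w(-,\alpha)\in J(FX)$ and that $\Sup_{FX}w(-,\alpha)=\alpha$ for each $\alpha\in FX$; together these make $\alpha\mapsto w(-,\alpha)$ a left adjoint to $\Sup_{FX}$, giving $J$-continuity, and a maximality argument then forces $\way_{FX}=w$.

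I would begin by observing that $J$-suprema in $FX$ are computed pointwise. Since $FX$ is the pullback in $\Cocts{J}$ of the $J$-sup-preserving inclusions $\JCocts{X}{\V}\hookrightarrow\V^X$ and $J(X^\op)\hookrightarrow\V^X$, and $\V^X$ has pointwise $J$-suprema, for any $\Psi\in J(FX)$ and $y\in X$ one has $(\Sup_{FX}\Psi)(y)=\bigvee_{\gamma\in FX}\Psi(\gamma)\otimes\gamma(y)$. Auxiliarity $w\leqslant FX$ is immediate from the $\V$-functoriality of $\alpha$: $\alpha(x)\otimes[\beta,\lambda_X x]\otimes\beta(y)\leqslant\alpha(x)\otimes X(x,y)\leqslant\alpha(y)$.

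For the approximating computation, I would unwind $(\Sup_{FX}w(-,\alpha))(y)=\bigvee_{\gamma,x}\alpha(x)\otimes[\gamma,\lambda_X x]\otimes\gamma(y)$. Open module determination of $X$ yields $\bigvee_{\gamma\in FX}\gamma(y)\otimes[\gamma,\lambda_X x]=\way_X(x,y)$, and the $J$-cocontinuity of $\alpha$ combined with the approximating property of $\way_X$ on the $J$-continuous $X$ gives the Yoneda-like identity $\alpha(y)=\bigvee_x\alpha(x)\otimes\way_X(x,y)$. Composing these two facts yields $(\Sup_{FX}w(-,\alpha))(y)=\alpha(y)$, so $\Sup_{FX}w(-,\alpha)=\alpha$.

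The main obstacle is showing $w(-,\alpha)\in J(FX)$. After rearranging $(w(-,\alpha)\cdot\Psi)=\bigvee_x\alpha(x)\otimes K_x(\Psi)$ with $K_x(\Psi):=\bigvee_\beta\Psi(\beta)\otimes[\beta,\lambda_X x]$, and noting that $x\mapsto K_x(\Psi)$ is contravariant in $x$ (because $\lambda_X x$ is, so defines a $\V$-functor $X^\op\to\V$), I would invoke $\alpha\in J(X^\op)$ to transfer $\Phi$-limit preservation from $\alpha\cdot-$ to the outer sum over $x$. The inner preservation in the $\Psi$-variable is not immediate, since the individual modules $[-,\lambda_X x]$ need not lie in $J(FX)$ on their own, and I anticipate handling it by re-expressing $K_x$ via open module determination so that $\alpha$'s $J$-cocontinuity on the $\way_X$-approximating data absorbs the required preservation. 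Once this is in place, the assignment $\alpha\mapsto w(-,\alpha)$ is a section of $\Sup_{FX}$ landing in $J(FX)$ and, by auxiliarity together with the $\V$-enriched adjoint calculus, a left adjoint to $\Sup_{FX}$; hence $FX$ is $J$-continuous. By maximality of $\way_{FX}$ as the largest module $v\in J$ with $\Sup_{FX}^*\cdot v\leqslant\yoneda_{FX,*}$, together with the approximating property already verified for $w$, we conclude $\way_{FX}=w$.
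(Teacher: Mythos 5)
Your overall architecture matches the paper's: verify that $w(-,\alpha)\in J(FX)$, that $\Sup_{FX}(w(-,\alpha))=\alpha$, and then conclude the adjunction. The second of these you handle correctly (pointwise suprema, open module determination giving $\bigvee_{\gamma}\gamma(y)\otimes[\gamma,\lambda_X x]=\way_X(x,y)$, and $J$-cocontinuity of $\alpha$ giving $\alpha(y)=\bigvee_x\alpha(x)\otimes\way_X(x,y)$), and your route to the counit $w(-,\Sup_{FX}\Psi)\leqslant\Psi$ --- via auxiliarity of $w$ and the module property of $\Psi$ applied to the pointwise supremum formula --- is sound and in fact bypasses the paper's argument, which instead writes $\way_{FX}=i^*\cdot\way_{J(X^\op)}\cdot i_*$ for the inclusion $i\colon FX\hookrightarrow J(X^\op)$ and restricts the known way-below module of $J(X^\op)$.

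The genuine gap is in the central step, membership of $w(-,\alpha)$ in $J(FX)$, which you explicitly leave as an anticipated maneuver. The difficulty is exactly where you locate it, but the mechanism you name ("$\alpha$'s $J$-cocontinuity on the $\way_X$-approximating data absorbs the required preservation") is not the one that closes it, and as stated it would not: a single interchange using $\alpha\in J(X^\op)$ only yields $\bigvee_x\alpha(x)\otimes\bigwedge_{a}\V(\phi(a),k(a,x))$ with $k(a,x)=\bigvee_\beta[\beta,\lambda_X x]\otimes h(a,\beta)$, and there is no direct way to pull the remaining meet past the sum over $\beta$. What is actually needed is a \emph{second} application of the $\Phi$-limit interchange, and the modules licensing it are the open modules themselves: expand $\alpha(x)=\bigvee_{y}\alpha(y)\otimes\way_X(y,x)=\bigvee_{y,\gamma\in FX}\alpha(y)\otimes[\gamma,\lambda_X(y)]\otimes\gamma(x)$ (approximation plus open module determination), then use that each $\gamma\in FX$ lies in $J(X^\op)$ to commute $\bigvee_x\gamma(x)\otimes(-)$ back inside $\bigwedge_a\V(\phi(a),-)$, and finally observe $\bigvee_x\gamma(x)\otimes k(a,x)=\bigvee_\beta[\beta,\gamma]\otimes h(a,\beta)\leqslant h(a,\gamma)$, which reassembles the bound $\bigvee_\gamma\way_{FX}(\gamma,\alpha)\otimes\bigwedge_a\V(\phi(a),h(a,\gamma))$. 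Without this two-stage interchange --- first out along $\alpha$, then back in along each $\gamma$ --- the step does not go through, so you should regard this as a missing argument rather than a routine verification.
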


\begin{proof}
Note that \eqref{eq:way on FX} states that the way-below module on $FX$ is the restriction of the way-below module on $J(X^\op)$ (see \eqref{way_for_JX}). First we wish to show that
\[\way_{FX}(-,\alpha) := \bigvee_{x\in X}(\alpha(x)\otimes [-,\lambda_X(x)])\]
is a $J$-module of type $FX\modto 1$, for every $\alpha\in FX$. To this end, we consider a diagram
\[1\stackrel{\phi}{\modto} A\stackrel{h}{\to} \V^{FX}\]
where $\phi$ belongs to $\Phi$. We calculate:
\begin{align*}
&\bigwedge_{a\in A}\V(\phi(a),\bigvee_{\beta\in FX}(\way_{FX}(\beta,\alpha)\otimes h(a,\beta)))\\
&= \bigwedge_{a\in A}\V(\phi(a),\bigvee_{x\in X}(\alpha(x)\otimes (\bigvee_{\beta\in FX}([\beta,\lambda_X(x)]\otimes h(a,\beta)))))\\
&\{ \mathrm{put}\ k(a,x) := \bigvee_{\beta\in FX}([\beta,\lambda_X(x)]\otimes h(a,\beta))\ \mathrm{where}\ k\colon A\to\V^{X^\op}\}\\
&=\bigvee_{x\in X}(\alpha(x)\otimes \bigwedge_{a\in A}(\V(\phi(a),k(a,x))))\\
&=\bigvee_{x,y\in X}((\alpha(y)\otimes \way_X(y,x))\otimes \bigwedge_{a\in A}(\V(\phi(a),k(a,x))))\\
&=\bigvee_{\gamma\in FX}\bigvee_{x,y\in X}((\gamma(x)\otimes\alpha(y)\otimes [\gamma,\lambda_X(y)])\otimes \bigwedge_{a\in A}(\V(\phi(a),k(a,x))))\\
&=\bigvee_{\gamma\in FX}\bigvee_{y\in X}(\alpha(y)\otimes [\gamma,\lambda_X(y)]\otimes (\bigvee_{x\in X}(\gamma(x)\otimes \bigwedge_{a\in A}(\V(\phi(a),k(a,x))))))\\
&=\bigvee_{\gamma\in FX}(\way_{FX}(\gamma,\alpha)\otimes \bigwedge_{a\in A}(\V(\phi(a),\bigvee_{x\in X}(\gamma(x)\otimes k(a,x)))))\\
&=\bigvee_{\gamma\in FX}(\way_{FX}(\gamma,\alpha)\otimes \bigwedge_{a\in A}(\V(\phi(a),\bigvee_{\beta\in FX}\bigvee_{x\in X}(\gamma(x)\otimes [\beta,\lambda_X(x)]\otimes h(a,\beta)))))\\
&=\bigvee_{\gamma\in FX}(\way_{FX}(\gamma,\alpha)\otimes \bigwedge_{a\in A}(\V(\phi(a),\bigvee_{\beta\in FX}([\beta,\gamma]\otimes h(a,\beta)))))\\
&\leqslant\bigvee_{\gamma\in FX}(\way_{FX}(\gamma,\alpha)\otimes \bigwedge_{a\in A}(\V(\phi(a),h(a,\beta)))),\\
\end{align*}
as required (recall that the other inequality we get for free).
Furthermore, we calculate:
\begin{align*}
\Sup_{FX}(\way_{FX}(-,\alpha))(x) &= \bigvee_{\beta\in FX}(\way_{FX}(\beta,\alpha)\tensor \beta(x))\\
                                  &= \bigvee_{\beta\in FX}\bigvee_{y\in X}(\alpha(y)\tensor [\beta,\lambda_X(y)]\tensor \beta(x))\\
                                  &= \bigvee_{y\in X}(\alpha(y)\tensor \bigvee_{\beta\in FX}([\beta,\lambda_X(y)]\tensor \beta(x)))\\
                                  &= \bigvee_{y\in X}(\alpha(y)\tensor \bigvee_{\beta\in FX}([\beta,\lambda_X(y)]\tensor [\lambda_X(x),\beta]))\\
                                  &= \bigvee_{y\in X}(\alpha(y)\tensor \way_X(y,x))\\
                                  &= \alpha(x),
\end{align*}
hence $\Sup_{FX}(\way_{FX}(-,\alpha)) = \alpha$. Finally, to conclude that $\mate{\way_{FX}}\dashv\yoneda_{FX}$, let $\psi:FX\modto 1$ in $J$. Let $i$ denote the inclusion $\V$-functor $FX\hookrightarrow J(X^\op)$ and $\way_{J(X^\op)}$ the way-below module on $J(X^\op)$. We observed already that $\way_{FX}=i^*\cdot\way_{J(X^\op)}\cdot i_*$. Hence,
\begin{multline*}
\mate{\way_{FX}}\cdot\Sup_{FX}(\psi)
=(\Sup_{FX}(\psi))^*\cdot\way_{FX}
=(\Sup_{FX}(\psi))^*\cdot i^*\cdot\way_{J(X^\op)}\cdot i_*\\
=(\Sup_{J(X^\op)}(\psi\cdot i^*))^*\cdot\way_{J(X^\op)}\cdot i_*
\leqslant\psi\cdot i^*\cdot i_*=\psi.\qedhere
\end{multline*}
\end{proof}

\begin{lemma}
\label{lemma:FX open module determined}
$FX$ is open module determined.
\end{lemma}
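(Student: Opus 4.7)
The plan is to verify the defining inequality
\[
\way_{FX}(\beta,\alpha)\leqslant\bigvee_{\Xi\in F(FX)}\Xi(\alpha)\tensor[\Xi,\lambda_{FX}(\beta)]\qquad(\beta,\alpha\in FX),
\]
the reverse inequality being automatic by the remark following the definition of open module determined. The starting point is the explicit formula from Lemma \ref{lemma:FX continuous}, namely $\way_{FX}(\beta,\alpha)=\bigvee_{x\in X}\alpha(x)\tensor[\beta,\lambda_X(x)]$, together with the stockpile of open modules on $FX$ supplied by Lemma \ref{lemma:ev}: for every $x\in X$ the evaluation $\ev_x\colon FX\to\V$ belongs to $F(FX)$. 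Since $\ev_x(\alpha)=\alpha(x)$, it suffices to establish the pointwise estimate
\[
[\beta,\lambda_X(x)]\leqslant[\ev_x,\lambda_{FX}(\beta)]\qquad(x\in X,\ \beta\in FX)
\]
and then sum over $x$, bounding afterwards by the larger supremum indexed by all $\Xi\in F(FX)$.

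The main step is this key inequality. Its right-hand side unfolds as $\bigwedge_{\gamma\in FX}\V(\gamma(x),[\beta,\gamma])$, so by the adjunction $\tensor\dashv\V(\cdot,-)$ the task reduces to showing $[\beta,\lambda_X(x)]\tensor\gamma(x)\leqslant[\beta,\gamma]$ for every $\gamma\in FX$. The defining adjunction of the internal hom gives $[\beta,\lambda_X(x)]\tensor\beta(z)\leqslant X(x,z)$ for every $z\in X$; multiplying by $\gamma(x)$ and using that $\gamma\in J(X^\op)$, as a $\V$-functor $X\to\V$, satisfies $X(x,z)\tensor\gamma(x)\leqslant\gamma(z)$, one deduces
\[
[\beta,\lambda_X(x)]\tensor\gamma(x)\tensor\beta(z)\leqslant\gamma(z),
\]
which rearranges to $[\beta,\lambda_X(x)]\tensor\gamma(x)\leqslant\V(\beta(z),\gamma(z))$. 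Taking the meet over $z\in X$ delivers the desired inequality.

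Assembling the pieces, for each $x\in X$ one has $\alpha(x)\tensor[\beta,\lambda_X(x)]\leqslant\ev_x(\alpha)\tensor[\ev_x,\lambda_{FX}(\beta)]$, and summing over $x$ and bounding by the supremum over all $\Xi\in F(FX)$ yields exactly what is required. I do not anticipate a genuine obstacle: Lemma \ref{lemma:ev} already provides the requisite open modules on $FX$ and Lemma \ref{lemma:FX continuous} supplies the explicit form of $\way_{FX}$, so the argument collapses to the short monoidal calculation above, whose only subtle point is recognising that elements of $J(X^\op)$ act covariantly on the $X$-variable in the sense $X(x,z)\tensor\gamma(x)\leqslant\gamma(z)$.
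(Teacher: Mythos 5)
Your argument is correct and follows essentially the same route as the paper: start from the explicit formula for $\way_{FX}$ in Lemma~\ref{lemma:FX continuous}, use Lemma~\ref{lemma:ev} to recognise each $\ev_x$ as an element of $FFX$, compare $[\beta,\lambda_X(x)]$ with $[\ev_x,\lambda_{FX}(\beta)]$, and invoke the automatic reverse inequality. The only cosmetic difference is that the paper asserts the comparison as an equality of homs while you prove just the inequality $[\beta,\lambda_X(x)]\leqslant[\ev_x,\lambda_{FX}(\beta)]$, which is all that is needed.
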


\begin{proof}
For all $\alpha,\beta\in FX$:
\begin{multline*}
\way_{FX}(\beta,\alpha)
=\bigvee_{z\in X}(\alpha(z)\tensor [\beta,\lambda_X(z)])
=\bigvee_{z\in X}(\ev_z(\alpha)\tensor [\lambda_X(z)_*,\beta_*])\\
=\bigvee_{z\in X}(\ev_z(\alpha)\tensor [\ev_z,\lambda_{FX}(\beta)])
=\bigvee_{\mathcal{A}\in FFX}(\mathcal{A}(\alpha)\tensor [\mathcal{A},\lambda_{FX}(\beta)])\qedhere
\end{multline*}
% It is routine to see that $[\beta, \lambda_X(z)]\leqslant [\ev_z,\way_{FX}(\beta,-)]$ holds for any $z\in X$ and $\beta\in FX$. Hence for any $\alpha\in FX$:
% \begin{align*}
% \way_{FX}(\beta,\alpha) &= \bigvee_{z\in X}(\alpha(z)\tensor [\beta,\lambda_X(z)])\\
% &\leqslant  \bigvee_{z\in X}(\alpha(z)\tensor [\ev_z,\way_{FX}(\beta,-)]\\
% &= \bigvee_{z\in X}(\ev_z(\alpha)\tensor [\ev_z,\way_{FX}(\beta,-)]\\
% &\{\mathrm{by\ Lemma~\ref{lemma:AB}}\}\\
% &= \bigvee_{\mathcal{A}}(\mathcal{A}(\alpha)\tensor [\mathcal{A},\way_{FX}(\beta,-)]).
% \end{align*}
\end{proof}

\noindent By the discussion in Section~\ref{subsect:open modules} and Lemmata~\ref{lemma:FX continuous},~\ref{lemma:FX open module determined} we obtain:
\begin{theorem}
\label{thm:FX}
If $X$ is a $J$-continuous, $J$-cocomplete and open module determined $\V$-category, then so is $FX$.
\end{theorem}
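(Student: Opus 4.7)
The plan is to assemble the three properties that constitute the conclusion one by one, each being supplied by material already established in the excerpt, so that the theorem reduces to a bookkeeping statement.

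First I would record that $FX$ is $J$-cocomplete. This is already read off from the definition of $FX$ as the pullback in $\Cocts{J}$ of the inclusions $\JCocts{X}{\V}\hookrightarrow \V^X$ and $J(X^{\op})\hookrightarrow \V^X$; since $\Cocts{J}\to \Cat{\V}$ is monadic and in particular has pullbacks, and the vertex of the pullback is again in $\Cocts{J}$, the object $FX$ lives in $\Cocts{J}$, which is exactly the $J$-cocompleteness condition. I would also remind the reader that by construction both projection maps $FX\hookrightarrow J(X^{\op})$ and $FX\hookrightarrow \JCocts{X}{\V}$ preserve $J$-suprema, a fact that was silently used when identifying the way-below module on $FX$ with the restriction of the one on $J(X^{\op})$.

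Next I would invoke Lemma~\ref{lemma:FX continuous}: this lemma explicitly produces, for each $\alpha \in FX$, the candidate module $\way_{FX}(-,\alpha)$ given by \eqref{eq:way on FX}, checks that it belongs to $J$, shows that its $FX$-supremum is $\alpha$, and verifies the adjunction $\mate{\way_{FX}}\dashv \Sup_{FX}$ via the restriction from $J(X^{\op})$. Together with the $J$-cocompleteness recalled above, this is exactly the definition of $J$-continuity of $FX$ and identifies the way-below module.

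Finally I would cite Lemma~\ref{lemma:FX open module determined}, whose calculation rewrites $\way_{FX}(\beta,\alpha)$ as $\bigvee_{\mathcal{A}\in FFX}(\mathcal{A}(\alpha)\tensor [\mathcal{A},\lambda_{FX}(\beta)])$, which is precisely the open module determined equation \eqref{eq:technical lemma} applied to $FX$. No step requires further work beyond gathering these conclusions, so the theorem follows at once. The only place where there was genuine content was in verifying the formula \eqref{eq:way on FX} and its $\Phi$-continuity inside Lemma~\ref{lemma:FX continuous}; since that verification is already done, I expect no further obstacle in this final assembly.
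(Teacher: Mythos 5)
Your proposal is correct and coincides with the paper's own argument: the paper likewise derives the theorem by combining the pullback description of $FX$ from Section~\ref{subsect:open modules} (giving $J$-cocompleteness) with Lemmata~\ref{lemma:FX continuous} and~\ref{lemma:FX open module determined}. Nothing further is needed.
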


\noindent Our next aim is to show that $\eta_X\colon:X\to FFX$ is an isomorphism. To do so, let now $\mathcal{A}\colon FX\to\V$ be an open module on $FX$. We define:
\[\psi_{\mathcal{A}}(x) := \bigvee_{\alpha\in FX}(\mathcal{A}(\alpha)\tensor[\alpha,\lambda_X (x)]).\]
Such defined $\psi_{\mathcal{A}}$ is a module $X\modto 1$, since it is the composite:
\[X\stackrel{{\lambda_X}_*}{\modto} J(X^\op)^\op\stackrel{i^*}{\modto}FX^\op\stackrel{\mathcal{A}}{\modto}1.\]
We also need to have:
\begin{lemma}
\label{lemma:psi}
For every $\mathcal{A}\in FFX$, we have $\psi_{\mathcal{A}}\in JX$.
\end{lemma}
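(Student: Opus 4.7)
The plan is to verify that $\psi_{\mathcal{A}}\cdot{-}\colon\V^X\to\V$ preserves $\Phi$-weighted limits, which by Example~\ref{ex:JdefinedByPhi} is equivalent to $\psi_{\mathcal{A}}\in JX$; the shape of the argument mirrors that of Lemma~\ref{lemma:ev}, with the weight $\mathcal{A}$ taking the role previously played by $C_x$.

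Fix a $\V$-functor $\alpha_{-}\colon I\to\V^X$ and a weight $\phi\colon 1\modto I$ in $\Phi$. Unfolding the definition of $\psi_{\mathcal{A}}$ gives
\[
\psi_{\mathcal{A}}\cdot\alpha_i
=\bigvee_{\beta\in FX}\mathcal{A}(\beta)\otimes M_i(\beta),
\qquad
M_i(\beta):=\bigvee_{x\in X}\alpha_i(x)\otimes[\beta,\lambda_X x];
\]
each $M_i\colon FX^{\op}\to\V$ is a $\V$-functor (antimonotone in $\beta$) and $i\mapsto M_i$ is $\V$-functorial. Because $\mathcal{A}\in J(FX^{\op})$, the $\Phi$-limit preservation property of $\mathcal{A}\cdot{-}$ yields
\[
\bigwedge_{i}\V(\phi(i),\psi_{\mathcal{A}}\cdot\alpha_i)
=\bigvee_{\beta\in FX}\Bigl(\bigwedge_{i}\V(\phi(i),M_i(\beta))\Bigr)\otimes\mathcal{A}(\beta).
\]
Writing $\tilde\beta\colon X\modto 1$ for the module $\tilde\beta(x):=[\beta,\lambda_X x]$, one has $M_i(\beta)=\tilde\beta\cdot\alpha_i$, so the whole argument reduces to the sub-claim that $\tilde\beta\in JX$ for every $\beta\in FX$: granted this, the $\Phi$-limit preservation property of $\tilde\beta$ bounds each inner expression by $\bigvee_x\bigl(\bigwedge_i\V(\phi(i),\alpha_i(x))\bigr)\otimes[\beta,\lambda_X x]$, and summing against $\mathcal{A}(\beta)$ while swapping the suprema over $x$ and $\beta$ produces $\bigvee_x\bigl(\bigwedge_i\V(\phi(i),\alpha_i(x))\bigr)\otimes\psi_{\mathcal{A}}(x)$, the required inequality.

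The hard part is the sub-claim $\tilde\beta\in JX$. I would establish it by observing that $\tilde\beta=\beta^{*}\cdot(\lambda_X)_{*}$, where $\beta^{*}\colon J(X^{\op})^{\op}\modto 1$ is the representable module at $\beta\in J(X^{\op})$ and hence lies in $J(J(X^{\op})^{\op})$ by saturation. The claim then reduces to closure of the saturated class $J$ under pullback along the $\V$-functor $\lambda_X\colon X\to J(X^{\op})^{\op}$. For the specific $J$ defined by $\Phi$-limit preservation in Example~\ref{ex:JdefinedByPhi}, this pullback closure can be verified by direct computation, using the $J$-cocontinuity of $\beta$ (which gives $\beta=\way_X\cdot\beta$) together with the fact that each $\way_X(-,y)$ lies in $JX$ (by $J$-continuity of $X$) to rewrite $\tilde\beta$ as an explicit $J$-weighted colimit in $\widehat{X}$ of modules already in $JX$, so that saturation yields $\tilde\beta\in JX$.
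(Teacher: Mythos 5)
The first half of your argument is sound and in fact coincides with the paper's opening moves: expanding $\psi_{\mathcal{A}}\cdot\alpha_i=\bigvee_{\beta\in FX}\mathcal{A}(\beta)\otimes M_i(\beta)$ and using $\mathcal{A}\in J(FX^{\op})$ to commute $\mathcal{A}\cdot{-}$ past the $\Phi$-weighted limit is exactly the paper's first step (there $M_i(\beta)$ appears as $\way_{FX}(\beta,h(i))$). The gap is in your reduction to the sub-claim that $\tilde\beta=[\beta,\lambda_X(-)]$ lies in $JX$ for every $\beta\in FX$: this sub-claim is false. Take $\V=\two$ and $J=\FSW$, so that $JX$ consists of the order-ideals of $X$ and $FX$ of the Scott-open filters, and let $X$ be the continuous dcpo consisting of a descending chain $w_0>w_1>\cdots$ together with two incomparable elements $x_1,x_2$ lying below every $w_n$ (all directed suprema are attained, so $X$ is continuous and open module determined). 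Then $\beta=\{w_n\mid n\in\omega\}$ is an open module, but $\tilde\beta$ is the set of lower bounds of $\beta$, namely $\{x_1,x_2\}$, which is not directed and hence not in $JX$. This also explains why your proposed route to the sub-claim cannot be repaired: $\tilde\beta$ is a precomposition of a $J$-ideal with the $\V$-functor $\lambda_X$, and a saturated class is closed under the pushforward $\psi\mapsto\psi\cdot f^{*}$ but not under precomposition with arbitrary $\V$-functors (preimages of directed sets need not be directed).

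What saves the lemma --- and what the paper actually does --- is that one never needs each $\tilde\beta$ individually to be a $J$-ideal, only the $\mathcal{A}$-weighted aggregate $\bigvee_{\beta}\mathcal{A}(\beta)\otimes\bigwedge_{i}\V(\phi(i),M_i(\beta))$. The paper interpolates $\mathcal{A}(\beta)=\bigvee_{\gamma\in FX}\bigvee_{x\in X}\mathcal{A}(\gamma)\otimes[\gamma,\lambda_X(x)]\otimes\beta(x)$ using $\way_{FX}$, and regroups so that the factor $\bigvee_{\gamma}\mathcal{A}(\gamma)\otimes[\gamma,\lambda_X(x)]$ reassembles into $\psi_{\mathcal{A}}(x)$ while the remaining supremum over $\beta$ is weighted by $\ev_x(\beta)=\beta(x)$. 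Since $\ev_x\in J(FX^{\op})$ by Lemma~\ref{lemma:ev}, that supremum can be pushed inside the $\Phi$-limit, and the inequality $\bigvee_{\beta}\beta(x)\otimes\way_{FX}(\beta,h(i))\leqslant h(i,x)$ finishes the estimate. In your counterexample this is exactly the point: the union of the lower-bound sets $\tilde\beta$ over all $\beta$ in the Scott-open filter $\mathcal{A}$ is directed even though each individual $\tilde\beta$ need not be. So the missing idea is the interpolation step combined with Lemma~\ref{lemma:ev}; the pointwise sub-claim you rely on is not available.
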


\begin{proof}
In order to check that $\psi_{\mathcal{A}}\colon X\modto 1$ belongs to $JX$, we need to check whether $\psi_{\mathcal{A}}\cdot -\colon \V^X\to \V$ preserves $\Phi$-weighted limits. Let
\[1\stackrel{\phi}{\modto} A\stackrel{h}{\to} \V^X\]
be a limit diagram with $\phi$ in $\Phi$. Spelled out, we have to show that
\begin{equation*}
\bigvee_{x\in X}(\psi_{\mathcal{A}}(x)\tensor \bigwedge_{y\in A}(\V(\phi(y),h(y,x))))\geqslant \bigwedge_{y\in A}(\V(\phi(y),\bigvee_{x\in X}(\psi_{\mathcal{A}}(x)\tensor h(y,x)))).
\end{equation*}
To this end, we calculate:
\begin{align*}
 & \bigwedge_{y\in A}(\V(\phi(y),\bigvee_{x\in X}(\psi_{\mathcal{A}}(x)\tensor h(y,x))))\\
 &= \bigwedge_{y\in A}(\V(\phi(y),\bigvee_{x\in X}\bigvee_{\alpha\in FX}(\mathcal{A}(\alpha)\tensor [\alpha, \lambda_X(x)]\tensor h(y,x))))\\
 &= \bigwedge_{y\in A}(\V(\phi(y),\bigvee_{\alpha\in FX}(\mathcal{A}(\alpha)\tensor \way_{FX}(\alpha,h(y))))) \ \ \ \ \{ \mathrm{since}\ \mathcal{A}^\op\in J\}\\
 &= \bigvee_{\alpha\in FX}(\mathcal{A}(\alpha)\tensor \bigwedge_{y\in A}(\V(\phi(y), \way_{FX}(\alpha,h(y)))))\\
 &= \bigvee_{\alpha,\beta\in FX}((\mathcal{A}(\beta)\tensor \way_{FX}(\beta,\alpha))\tensor \bigwedge_{y\in A}(\V(\phi(y), \way_{FX}(\alpha,h(y)))))\\
 &= \bigvee_{\alpha,\beta\in FX}\bigvee_{x\in X}((\mathcal{A}(\beta)\tensor \alpha(x)\tensor [\beta,\lambda_X(x)])\tensor \bigwedge_{y\in A}(\V(\phi(y), \way_{FX}(\alpha,h(y)))))\\
 &= \bigvee_{x\in X}\bigvee_{\beta\in FX}(\mathcal{A}(\beta)\tensor [\beta,\lambda_X(x)])\tensor \bigvee_{\alpha\in FX} \ev_x(\alpha)\tensor \bigwedge_{y\in A}(\V(\phi(y), \way_{FX}(\alpha,h(y))))\\
 &\{\ev_x\ \mathrm{is\ a\ filter}\}\\
 &= \bigvee_{x\in X} (\psi_{\mathcal{A}}(x) \tensor \bigwedge_{y\in X}\V(\phi(y),\bigvee_{\alpha\in FX}(\alpha(x)\tensor \way_{FX}(\alpha, h(y)))))\\
 &\leqslant \bigvee_{x\in X} (\psi_{\mathcal{A}}(x) \tensor \bigwedge_{y\in X}\V(\phi(y),\alpha(x)\tensor [\alpha, h(y)]))\\
 &\leqslant \bigvee_{x\in X} (\psi_{\mathcal{A}}(x) \tensor \bigwedge_{y\in X}\V(\phi(y), h(y,x))),
\end{align*}
which proves $\psi_{\mathcal{A}}\in JX$.
\end{proof}

\begin{lemma}
\label{lemma:AB}
For any $\alpha\in FX$, we have $\mathcal{A}(\alpha) = \alpha(\Sup_X(\psi_{\mathcal{A}}))$.
\end{lemma}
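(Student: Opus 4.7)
My plan is to compute $\alpha(\Sup_X(\psi_\mathcal{A}))$ directly and match the outcome against $\mathcal{A}(\alpha)$ using the identity $\Sup_{FX}(\way_{FX}(-,\alpha)) = \alpha$ established inside the proof of Lemma~\ref{lemma:FX continuous}. The whole argument is essentially a reshuffling of suprema, once one notices that the inner double-sum appearing in $\psi_\mathcal{A}$ is exactly the way-below formula~\eqref{eq:way on FX} on $FX$.

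First, by Lemma~\ref{lemma:psi}, $\psi_\mathcal{A} \in JX$, so $\Sup_X(\psi_\mathcal{A})$ is well defined by $J$-cocompleteness of $X$. Since $\alpha \in FX$ is $J$-cocontinuous as a $\V$-functor $X \to \V$, it preserves this supremum, and the standard formula for $J$-weighted colimits in $\V$ yields
\[
\alpha(\Sup_X(\psi_\mathcal{A})) = \bigvee_{x \in X} \psi_\mathcal{A}(x) \otimes \alpha(x).
\]
Substituting the definition of $\psi_\mathcal{A}$ and interchanging the two suprema gives
\[
\alpha(\Sup_X(\psi_\mathcal{A})) = \bigvee_{\beta \in FX} \mathcal{A}(\beta) \otimes \bigvee_{x \in X}\bigl(\alpha(x) \otimes [\beta, \lambda_X(x)]\bigr) = \bigvee_{\beta \in FX} \mathcal{A}(\beta) \otimes \way_{FX}(\beta, \alpha),
\]
where the last equality is formula~\eqref{eq:way on FX}.

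Second, from the proof of Lemma~\ref{lemma:FX continuous} we know that $\Sup_{FX}(\way_{FX}(-, \alpha)) = \alpha$. Since $\mathcal{A} \in FFX$ is $J$-cocontinuous and $\way_{FX}(-, \alpha) \in J(FX)$, applying $\mathcal{A}$ to both sides and using the analogous $\V$-colimit formula in $FX$ yields $\mathcal{A}(\alpha) = \bigvee_{\beta \in FX} \way_{FX}(\beta, \alpha) \otimes \mathcal{A}(\beta)$, which coincides with the previous expression by commutativity of $\otimes$. No serious obstacle is anticipated; the only point requiring care is the identification $\alpha(\Sup_X \psi) = \bigvee_x \psi(x) \otimes \alpha(x)$ for $\psi \in JX$ and $J$-cocontinuous $\alpha : X \to \V$, which is routine from the definition of $\Sup_\V$.
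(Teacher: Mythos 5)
Your argument is correct and follows essentially the same route as the paper: both compute $\alpha(\Sup_X\psi_{\mathcal{A}})$ as the $\psi_{\mathcal{A}}$-weighted colimit $\bigvee_x\psi_{\mathcal{A}}(x)\otimes\alpha(x)$ via $J$-cocontinuity of $\alpha$, interchange suprema to recognise $\way_{FX}(\beta,\alpha)$, and then invoke $\Sup_{FX}(\way_{FX}(-,\alpha))=\alpha$ together with $J$-cocontinuity of $\mathcal{A}$. The only cosmetic difference is that you run the final step from $\mathcal{A}(\alpha)$ outward rather than continuing the chain forward.
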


\begin{proof}
\begin{align*}
\alpha(\Sup_X(\psi_{\mathcal{A}})) &= \colim(\alpha,\psi_{\mathcal{A}})\\
                                   &= \bigvee_{x\in X}(\alpha(x)\tensor \psi_{\mathcal{A}}(x))\\
                                   &= \bigvee_{x\in X}(\alpha(x)\tensor \bigvee_{\beta\in FX}(\mathcal{A}(\beta)\tensor [\beta,\lambda_X(x))))\\
                                   &= \bigvee_{\beta\in FX}(\mathcal{A}(\beta)\tensor \bigvee_{x\in X}(\alpha(x)\tensor [\beta,\lambda_X(x)]))\\
                                   &= \bigvee_{\beta\in FX}(\mathcal{A}(\beta)\tensor \way_{FX}(\beta,\alpha))\\
                                   &= \colim(\mathcal{A},\way_{FX}(-,\alpha))\\
                                   &= \mathcal{A}(\Sup_{FX}(\way_{FX}(-,\alpha)))\\
                                   &= \mathcal{A}(\alpha).\qedhere
\end{align*}
\end{proof}

\begin{definition}
We say that a $\V$-functor $f\colon X\to Y$ between $\V$-categories {\em reflects open modules} if $\alpha \cdot f\in FX$ for every $\alpha\in F{Y}$. Let $\QDOM$ be the category of $J$-cocomplete, $J$-continuous and open module determined $\V$-categories together with open module reflecting maps.
\end{definition}

\begin{lemma}
The pair of operations
\begin{align*}
X &\ \ \mapsto\ \ FX \\
f\colon X\to Y &\ \ \mapsto\ \ -\cdot f\colon FY\to FX
\end{align*}
defines a contravariant functor, i.e.  $F\colon \QDOM^\op\to\QDOM$.
\end{lemma}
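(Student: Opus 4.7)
The statement decomposes into three verifications: well-definedness of $F$ on objects, well-definedness on morphisms, and compatibility with identities and composition. I would address them in this order.

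For objects, Theorem \ref{thm:FX} already shows that $FX$ is $J$-cocomplete, $J$-continuous, and open module determined whenever $X$ is, so no further argument is needed at this step.

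For morphisms, I would verify two things about $Ff := {-}\cdot f \colon FY \to FX$. That it is a $\V$-functor taking values in $FX$ is immediate: precomposition with $f$ is a $\V$-functor $\V^Y \to \V^X$, and it restricts to $FY \to FX$ because $f$ reflects open modules by hypothesis. The nontrivial point, and the main obstacle, is showing that $Ff$ itself reflects open modules, i.e. that $\mathcal{A} \cdot Ff \in F(FY)$ for every $\mathcal{A} \in F(FX)$. Here I plan to invoke Lemma \ref{lemma:AB}, which asserts $\mathcal{A}(\beta) = \beta(\Sup_X(\psi_{\mathcal{A}}))$ for all $\beta \in FX$; reading this pointwise, $\mathcal{A} = \ev_{x_{\mathcal{A}}}$ for $x_{\mathcal{A}} := \Sup_X(\psi_{\mathcal{A}}) \in X$. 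A short direct computation then yields, for any $\alpha \in FY$,
\[(\mathcal{A} \cdot Ff)(\alpha) = \mathcal{A}(\alpha \cdot f) = (\alpha \cdot f)(x_{\mathcal{A}}) = \alpha(f(x_{\mathcal{A}})) = \ev_{f(x_{\mathcal{A}})}(\alpha),\]
so $\mathcal{A} \cdot Ff = \ev_{f(x_{\mathcal{A}})}$, and this is an open module on $FY$ by Lemma \ref{lemma:ev} applied to $Y$ at the point $f(x_{\mathcal{A}}) \in Y$.

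Finally, the functoriality identities are immediate from the unit and associativity laws for precomposition: $F(\mathrm{id}_X)(\alpha) = \alpha \cdot \mathrm{id}_X = \alpha$ gives $F(\mathrm{id}_X) = \mathrm{id}_{FX}$, and $F(g \circ f)(\alpha) = \alpha \cdot (g \circ f) = (\alpha \cdot g) \cdot f = Ff(Fg(\alpha))$ gives the contravariant composition law $F(g \circ f) = Ff \circ Fg$. The only substantive step in the plan is the open-module-reflecting property of $Ff$, which, as indicated, reduces cleanly to the representability of $\mathcal{A}$ supplied by Lemma \ref{lemma:AB}.
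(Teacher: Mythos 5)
Your proposal is correct and follows essentially the same route as the paper: the paper likewise dismisses the functoriality identities as trivial and reduces everything to showing that $F(f)$ reflects open modules, using Lemma~\ref{lemma:AB} to write $\mathcal{A}=\ev_{x}$ with $x=\Sup_X\psi_{\mathcal{A}}$ and then computing $\mathcal{A}\cdot F(f)=\ev_{f(x)}$. Your explicit appeal to Lemma~\ref{lemma:ev} to certify that $\ev_{f(x)}$ is indeed an open module on $FY$ is a small but welcome addition of care over the paper's version.
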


\begin{proof}
Functoriality is trivial; we only need to show that $F(f)$ reflect open modules.
Let $\mathcal{A}\in F{F{X}}$. By Lemma~\ref{lemma:AB} there exists $x\in X$ such that $\mathcal{A} =\ev_{x}$, namely $x= \Sup_{X}\psi_{\mathcal{A}}$. Then, for any $\alpha\in F{Y}$, we have $(\mathcal{A} \cdot F(f))(\alpha) = \mathcal{A}(\alpha \cdot f) = \alpha(f(x))= \ev_{f(x)}(\alpha)$. Hence $\mathcal{A} \cdot F(f) = \ev_{f(x)}$, i.e. $\mathcal{A} \cdot F(f)\in FFX$.
\end{proof}

\begin{theorem}[The Duality Theorem]\label{thm:duality}
The category $\QDOM$ is self-dual.
\end{theorem}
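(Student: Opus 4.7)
The plan is to produce a natural isomorphism $\eta \colon \mathrm{id}_{\QDOM} \Rightarrow F \circ F^{\op}$, whose component at $X$ is the $\V$-functor $\eta_X \colon X \to FFX$, $x \mapsto \ev_x$, from~\eqref{eq:eta}. Together with the functor $F$ and Theorem~\ref{thm:FX} (which ensures $FFX$ again lies in $\QDOM$), this gives the self-duality.

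The cornerstone is identifying an inverse $\epsilon_X \colon FFX \to X$ defined by $\mathcal{A} \mapsto \Sup_X(\psi_{\mathcal{A}})$, where $\psi_{\mathcal{A}} \in JX$ is the module supplied by Lemma~\ref{lemma:psi}. The composite $\eta_X \circ \epsilon_X = \mathrm{id}_{FFX}$ is then precisely the content of Lemma~\ref{lemma:AB}. The other composite, $\epsilon_X \circ \eta_X = \mathrm{id}_X$, requires computing $\psi_{\ev_x}(z) = \bigvee_{\alpha \in FX} \alpha(x) \tensor [\alpha, \lambda_X z]$; the open module determined hypothesis~\eqref{eq:technical lemma} identifies this expression with $\way_X(z,x)$, and since $\way_X$ is approximating, $\Sup_X(\way_X(-,x)) = x$.

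To upgrade this set-theoretic inverse pair to a $\V$-isomorphism, it remains to show $FFX(\ev_x, \ev_y) \pod X(x,y)$ (the reverse inequality being automatic from $\eta_X$ being a $\V$-functor, as already observed after~\eqref{eq:eta}). This follows once $\epsilon_X$ is shown to be $\V$-functorial, by applying its $\V$-functoriality to $\eta_X(x)$ and $\eta_X(y)$. The $\V$-functoriality of $\epsilon_X = \Sup_X \circ \psi_{(-)}$ reduces to that of $\psi_{(-)} \colon FFX \to JX$, which is immediate from the definition of the internal hom on $FFX$: since $\mathcal{A}(\alpha) \tensor FFX(\mathcal{A},\mathcal{B}) \pod \mathcal{B}(\alpha)$ for every $\alpha \in FX$, one obtains $FFX(\mathcal{A},\mathcal{B}) \pod \bigwedge_x \V(\psi_{\mathcal{A}}(x), \psi_{\mathcal{B}}(x))$. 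Being now a $\V$-iso between objects of $\QDOM$, $\eta_X$ transports $J$-cocontinuous $\V$-functors into $\V$ and $J$-ideals on the opposite category bijectively, so both $\eta_X$ and $\epsilon_X$ reflect open modules and thus are morphisms in $\QDOM$.

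Naturality is the short verification $FF(f)(\eta_X(x))(\alpha) = \eta_X(x)(\alpha \cdot f) = \alpha(f(x)) = \eta_Y(f(x))(\alpha)$, using $F(f) = -\cdot f$. The main obstacle sits in the second paragraph: recognizing the correct inverse formula $\mathcal{A} \mapsto \Sup_X(\psi_{\mathcal{A}})$ and seeing that the open module determined hypothesis is precisely what forces $\psi_{\ev_x}$ to coincide with $\way_X(-,x)$. Once that identification is made, the $\V$-functoriality of $\epsilon_X$, the passage to $\QDOM$-morphisms, and the naturality square are all routine bookkeeping, and the conclusion $F \circ F^{\op} \cong \mathrm{id}_{\QDOM}$ follows.
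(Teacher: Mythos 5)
Your proposal is correct and follows exactly the paper's route: the paper's (one-line) proof exhibits the same unit $\eta_X(x)=\ev_x$ and counit $\eps_X(\mathcal{A})=\Sup_X\psi_{\mathcal{A}}$, with Lemma~\ref{lemma:AB} giving one composite and the open-module-determined hypothesis (identifying $\psi_{\ev_x}$ with $\way_X(-,x)$) giving the other. Your additional checks of $\V$-functoriality of $\eps_X$, the open-module-reflecting property, and naturality are details the paper leaves implicit, not a different argument.
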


\begin{proof}
The natural isomorphism $\eta\colon 1_{\QDOMindex}\to FF$ as defined in (\ref{eq:eta}) has the converse $\eps \colon FF\to 1_{\QDOMindex}$ given by $\eps_X(\mathcal{A})= \Sup_X\psi_{\mathcal{A}}$ for every $\mathcal{A}\in F{F{X}}$.
\end{proof}

\section{Examples of the duality}

\subsection{Lawson duality}\label{subsect:4.1} The case $\V=\two$ and $J=\FSW$, perhaps the simplest possible, served us as a proof guide throughout the paper. In fact, most of the crucial proof ideas (e.g. Lemma~\ref{lemma:AB}: any open module on open modules $\mathcal{A}$ is of the form $\ev_{\Sup_X \psi_{\mathcal{A}}}$ for some $J$-ideal $\psi_{\mathcal{A}}$) come from an analysis of this simple case. Observe that $\FSW$-continuous, $\FSW$-cocomplete $\two$-categories are continuous dcpos (domains). Furthermore, open modules are nothing else but (the characteristic maps of) Scott-open filters on domains. Recall that in this case any $FX$ is open module determined: the equality~(\ref{eq:technical lemma}) reduces to
\[\forall x,y\in X\ (x\wb y\ \implies\ \exists \alpha\in FX\ (y\in\alpha\pzb \uup x)),\]
and we define such $\alpha\in FX$ by $\alpha := \bigcup_{n\in\omega} \uup x_n$, where the descending chain $(x_n)_{n\in\omega}$ has been obtained by a repeated use of interpolation (see Prop.~3.3 of \cite{compendium}): \[x\wb\ldots\wb x_n\wb x_{n-1}\wb\ldots\wb x_2\wb x_1\wb x_0=y.\]
Consequently, the category $(\FSW,\two)\!\!-\!\!\mathbf{Dom}$ is the category of domains with open filter reflecting maps; our Theorem~\ref{thm:duality} reduces to Theorem~IV-2.12 of \cite{compendium} establishing the Lawson duality for domains. It is worth mentioning that the Lawson duality (originally proved in \cite{law79}) finds its applications in the theory of locally compact spaces; in particular, the lattice of opens of a locally compact sober space $X$ is Lawson dual to the lattice of compact saturated subsets of $X$ (cf. Hofmann-Mislove theorem).

\subsection{A metric duality}\label{subsect:4.2} In the case $\V=[0,\infty]$ with $\otimes=+$ and $J$ being the class of $\FSW$-ideals (or, equivalently, flat modules), our duality works in a~certain subcategory of $\Met$: its $\FSW$-cocomplete objects are known in the literature as Yoneda-complete gmses \cite{rutten:yoneda}. The $\FSW$-cocomplete and $\FSW$-continuous ones form a class not previously discussed in the literature, except in the forthcoming paper \cite{KW:balls}, where they are shown to be precisely the spaces having continuous and directed-complete formal ball models \cite{EH98, aliakbari, RV09} (this implies, in particular, that their topology and metric structure can be respectively characterized as a subspace Scott topology and a partial metric on a domain).

A proof that objects of $(\FSW,[0,\infty])\!\!-\!\!\mathbf{Dom}$ are open filter determined can be found in \cite{sylwia}; below we present a sketch of the proof. 

We abbreviate $\way_X$ to $\way$ and customarily use $+$ instead of $\otimes$, $\inf$ instead of $\bv$, etc. In order to show (\ref{eq:technical lemma}) 
it is enough to find a family of open filters $(\alpha_{ e, b})_{ e, b> 0}$,
such that $ e>\way(x,y)$ implies \[ e+ b \geqslant \alpha_{ e, b}(y) + [\alpha_{ e, b}, \way(x,-)] \geqslant
\inf_{\alpha\in FX} (\alpha(y) + [\alpha, \way(x,-)]),\] which, by complete distributivity of $([0,\infty],\geqslant)$, allows us to draw the desired conclusion. Take an arbitrary $ e>\way(x,y)$ and $ b>0$, and choose a chain
$( e_n)_{n\in\omega}$ in $([0,\infty],\geqslant)$ such that:
\begin{equation}
\label{equation:epsy}
\begin{split}
&  b> e_0+ e_0,\\
&  e_0 >  e_1 >  e_2 > \ldots >  e_n > \ldots > 0,\\
&  e_n \geqslant  e_{n+1} +  e_{n+2} + \ldots,\\
& \inf_{n\in \omega} e_n = 0.	
\end{split}
\end{equation}
Now, by interpolation, we can find a sequence $(x_n)_{n\in\omega}$ such that:
\begin{align*}
 &  e > \way(x,x_0) + \way(x_0,y) & \textmd{and}\  &  e_0 > \way(x_0,y),\\
 &  e > \way(x,x_1) + \way(x_1, x_0) + \way(x_0,y), & \textmd{and}\ &  e_1 > \way(x_1,x_0),\\
 &  e > \way(x,x_2) + \way(x_2,x_1) + \way(x_1, x_0) + \way(x_0,y) & \textmd{and}\ &  e_2 > \way(x_2,x_1),\\
 & \cdots & &\\
 &  e > \way(x,x_n) + \way(x_n,x_{n-1}) + \dots + \way(x_1, x_0)
+ \way(x_0,y) & \textmd{and}\ &  e_n > \way(x_n,x_{n-1}),\\
 & \cdots & &
\end{align*}
Define $\alpha_{ e, b}\colon X\to [0,\infty]$ as $\alpha_{ e, b}(z) := \inf_{n\in\omega}\sup_{k\geq n}X(x_k,z)$; this map is
an open module on $X$. In order to conclude (\ref{eq:technical lemma}), it is now enough to verify that
\begin{equation}\label{eq:pom2}
 e +  b \geqslant \alpha_{ e, b}(y) + [\alpha_{ e, b},\way(x,-)].
\end{equation}
However
\begin{eqnarray*}
\alpha_{ e, b}(y)  & = & \inf_{n\in\omega}\sup_{k\geq n}X(x_k,y)\\
                         & \leqslant & \sup_{k\geq 1}\ (X(x_k,x_{k-1}) + \dots + X(x_1,x_0) + X(x_0,y))\\
                         & \leqslant & \sup_{k\geq 1}\ (\way(x_k,x_{k-1}) + \dots + \way(x_1,x_0) + \way(x_0,y)) \ \ \  \{\mathrm{by}\ (\ref{equation:epsy})\}\\
                         & \leqslant &  e_0+ e_0\\
                         & < &  b.
\end{eqnarray*}
and
\begin{eqnarray*}
    [\alpha_{ e, b},\way(x,-)] & = & \sup_{z\in X}(\mmminus{\alpha_{e, b}(z)}{\way(x,z)})\\
      & \leqslant & \sup_{z\in X}(\mmminus{(\inf_{n\in\omega}\sup_{k\geq n} X(x_k,z))}{\way(x,z)})\\
      & \leqslant & \sup_{z\in X}(\inf_{n\in\omega}\sup_{k\geq n}(\mmminus{X(x_k,z)}{\way(x,z)}))\\
      & \leqslant & \sup_{n\in\omega}\sup_{k\geq n} \way(x,x_k)\\
      & \leqslant &  e.
\end{eqnarray*}
so (\ref{eq:pom2}), and therefore also (\ref{eq:technical lemma}) are now verified.

\subsection{An ultrametric duality}\label{subsect:ultramet}

For the quantale $\V=[0,\infty]$ with $\otimes=\max$, $\Cat{\V}$ is the category $\UMet$ of ultrametric spaces and contraction maps. As above, we can choose $J$ to be the class of all flat modules (see Example \ref{example:vickers}), and obtain that the corresponding category $\QDOM$ is self-dual. However, in ultrametric spaces flat modules are not, in general, $\FSW$-ideals, as the following example shows. 

\begin{example}
Consider the set $\Nat$ of natural numbers with the distance
\[\Nat(n,m)=\begin{cases} 
0 & \text{if $n=m$,}\\
\max(n,m) &\text{otherwise.}
\end{cases}
\]
This distance is a symmetric, separable ultrametric. %, i.e. it is a symmetric, separable $([0,\infty],\geq,\wedge,0)$-category.
Take
\[\phi(x)=\begin{cases} 
0 & \text{if $x=0$,}\\
1 &\text{if $x>0$.}
\end{cases}
\]
Trivially, $\phi$ preserves the empty meet. Now, observe that the proof of (the equivalence of (1) and (2) of) Proposition 7.9 in 
\cite{vickers} holds verbatim for $\otimes = \max$, hence
it is enough to show that $(\phi \cdot -)$ preserves meets of modules of the form $\max(\Nat(-,x),c)$ for some $c\in [0,\infty]$. Suppose $A:= \max(\Nat(-,a),c_1)$ and 
$B:= \max(\Nat(-,b),c_2)$ for $c_1,c_2\in[0,\infty]$; we are heading to prove:
 \noindent
\begin{equation}\label{eq:111}\tag{*}
\inf_{z\in \Nat} \max(Az,Bz,\phi z)= \max(\inf_{s\in\Nat} (\max(As,\phi s)),\inf_{r\in\Nat} (\max (Br,\phi r))).
\end{equation}

\noindent
We have
\begin{align*}
& \inf_{z\in \Nat} \max(Az,Bz,\phi z)
= \inf_{z\in \Nat} \max(z,a,b,c_1,c_2,\phi z)
= \max(a,b,c_1,c_2),\\
&\inf_{s\in\Nat} \max(As,\phi s)
= \inf_{s\in\Nat} \max(s,a,c_1,\phi s)
= \max(a,c_1),\\
&\inf_{r\in\Nat} \max(Br,\phi r)
=\inf_{r\in\Nat} \max(r,b,c_2,\phi r)=\max(b,c_2)
\end{align*}
since all these infima are attained for $z=r=s=0$. This shows (\ref{eq:111}), and so $\phi\colon X\modto 1$ is a flat module.

\noindent
On the other hand, $\phi$ is not an $\FSW$-ideal: we have $\phi(2)<2$ and $\phi(3)<2$ but there is no $z\in \Nat$ with $\phi(z) < 1$ and $\Nat(2,z)<2$ and $\Nat(3,z)<2$.
\end{example}

\subsection{The absolute case}

For any quantale $\V$, we can consider $\Phi$ being the empty class and therefore $JX=\widehat{X}$ is the collection 
of all modules of type $X\modto 1$. In this case, every cocontinuous $\V$-functor $\alpha\colon X\to\V$ is an open module. 
Furthermore, every totally continuous cocomplete \mbox{$\V$-category} is open module determined 
since $\way_X(x,-)\colon X\to\V$ is in $FX$. Finally, a $\V$-functor $f\colon X\to Y$ reflects open modules if 
and only if $f$ is left adjoint. Therefore Theorem \ref{thm:duality} states that the category of totally 
continuous cocomplete $\V$-categories and left adjoint $\V$-functors is self-dual.
 
\subsection{A somehow different example}\label{subsect:4.3}

We consider now $\V=[0,\infty]$ where $\otimes=+$, with the class $J$ of modules described in Example \ref{ex:formal balls}. However, for technical reasons we consider the unique module $\varnothing\modto 1$ as a formal ball, so that $J\varnothing=1$. 
Consequently, the empty space is not $J$-cocomplete. We will show now that our duality theorem holds in this case too, despite the fact that this class of modules is (to our knowledge) not defined via a class of limit weights.

Let now $X$ be a $J$-cocomplete and $J$-continuous metric space. We write $\way:X\to JX$ for the left adjoint to $\Sup:JX\to X$. Hence, for any $x\in X$, $\way(x)\in JX$ is of the form $\way(x)=X(-,x_1)+u$ for some $x_1\in X$ and $u\in[0,\infty]$. Note that $u<\infty$ if $x$ is not the bottom element of $X$. Assume that $\way(x_1)=X(-,x_2)+u_2$. Then
\[
 X(-,x_1)+u=\way(x)=\way(x_1+u_1)=\way(x_1)+u_1=X(-,x_2)+u_2+u_1,
\]
hence, $X(-,x_1)=X(-,x_2)+u_2$. In particular, $0=X(x_1,x_2)+u_2$, and therefore $u_2=0$ and we obtain $\way(x_1)=\yoneda(x_1)$. Let $A$ be the equaliser of $\yoneda$ and $\way$, that is, $A=\{x\in X\mid \way(x)=\yoneda(x)\}$. By the considerations above, $\way:X\to JX$ factors through the inclusion $JA\hookrightarrow JX$. Moreover, for any $X(-,x)+u$ with $x\in A$, $\way(x+u)=\way(x)+u=X(-,x)+u$, which gives $X\cong JA$. We also remark that $x\in A$ if and only if $X(x,-):X\to[0,\infty]$ preserves tensoring. One has $\phi\in FX$ precisely if $\phi=X(x,-)+u$ for some $x\in X$ and $u\in[0,\infty]$ and if, moreover, $\phi$ preserves tensoring. If $u<\infty$, then also $X(x,-)$ preserves tensoring, hence $x\in A$. Consequently, $FX\cong J(A^\op)$. 

Consider now $f:X\to Y$ with $X\cong JA$ and $Y\cong JB$ as above. Then $f$ is open module reflecting if, and only if, for each $y_0\in B$, there exists some $x_0\in A$ and some $v\in[0,\infty]$ with $Y(y_0,f(-))=X(x_0,-)+v$. We show that $f$ necessarily preserves tensoring. To this end, let $x\in X$ and $u\in[0,\infty]$. Then
\begin{multline*}
 Y(y_0,f(x+u))=X(x_0,x+u)+v=X(x_0,x)+v+u =Y(y_0,f(x))+u=Y(y_0,f(x)+u)
\end{multline*}
for all $y_0\in B$, hence $f(x+u)=f(x)+u$. Therefore $f$ corresponds to a module $\phi:B\modto A$ in the sense that, when identifying $X$ with $JA$ and $Y$ with $JB$, 
then $f(\psi)=\psi\cdot\phi$. Hence, for any $x\in A$, $x^*\cdot\phi=\phi(-,x)$ belongs to $JB$, and the $f$ being open module reflecting translates 
to $\phi\cdot y_*=\phi(y,-)\in J(A^\op)$ for all $y\in B$. Recall that for each module $\phi:B\modto A$ we have its dual $\phi^\op:A^\op\modto B^\op,\,\phi^\op(x,y)=\phi(y,x)$, 
and with this notation the latter condition reads as $y^*\cdot\phi^\op\in J(A^\op)$ for all $y\in B^\op$. We conclude that the category of $J$-cocomplete and $J$-continuous metric spaces and open 
module reflecting contraction maps is dually equivalent to the category of all metric spaces with morphisms those modules $\phi:X\to Y$ satisfying
\begin{align*}
\forall y\in Y\,.\,(y^*\cdot\phi\in JX) &&\text{and}&&
\forall x\in X^\op\,.\,(x^*\cdot\phi^\op\in J(Y^\op)),
\end{align*}
and the latter category is obviously self-dual.

\end{document}